\newtheorem{theorem}{Theorem}[section]
\newtheorem{lemma}[theorem]{Lemma}
\newtheorem{corollary}[theorem]{Corollary}
\newtheorem{question}[theorem]{Question}
\theoremstyle{definition}
\newtheorem{definition}[theorem]{Definition}
\theoremstyle{remark}
\newtheorem{remark}[theorem]{Remark}
\numberwithin{equation}{section}
\newcommand{\B}{\mathbb{B}}
\newcommand{\C}{\mathbb{C}}
\newcommand{\U}{\mathbb{U}}
\newcommand{\N}{\mathbb{N}}
\begin{document}

\title[Bieberbach conjecture, Bohr radius, and Bloch constant]
{Bieberbach conjecture, Bohr radius, Bloch constant and Alexander's theorem
in infinite dimensions
}


\author[H. Hamada]{Hidetaka Hamada$^{\ast}$}
\thanks{$^{\ast}$ Corresponding author}
\address{%
Faculty of Science and Engineering\\
Kyushu Sangyo University\\
3-1 Matsukadai, 2-Chome, Higashi-ku\\
Fukuoka 813-8503\\
Japan}
\email{h.hamada@ip.kyusan-u.ac.jp}

\author[G. Kohr]{Gabriela Kohr}
\address{Faculty of Mathematics and Computer Science,
Babe\c{s}-Bolyai University, 1 M. Kog\u{a}l\-niceanu Str., 400084,
Cluj-Napoca, Romania}
\email{gkohr@math.ubbcluj.ro}

\author[M. Kohr]{Mirela Kohr}
\address{Faculty of Mathematics and Computer Science,
Babe\c{s}-Bolyai University, 1 M. Kog\u{a}l\-niceanu Str., 400084,
Cluj-Napoca, Romania}
\email{mkohr@math.ubbcluj.ro}


\subjclass[2020]{32A05, 32A10, 32H02, 32K05.}

\date{\today}

\keywords{
Alexander's theorem,
Bieberbach conjecture,
Bloch constant,
Bohr radius,
complex Banach space,
holomorphic mapping.
}

\begin{abstract}
In this paper, we investigate holomorphic mappings $F$ on the unit ball $\mathbb{B}$ of a complex Banach space of the form $F(x)=f(x)x$,
where $f$ is a holomorphic function on $\mathbb{B}$.
First, we investigate 
criteria for univalence, starlikeness and quasi-convexity of type $B$ on $\B$.
Next, we investigate a generalized Bieberbach conjecture,
a covering theorem and a distortion theorem,
the Fekete-Szeg\"{o} inequality,
lower bound for the Bloch constant,
and
Alexander's type theorem
for such mappings.
\end{abstract}

\maketitle

\section{Introduction}
\label{intro}
\setcounter{equation}{0}
Let $\mathbb{C}^{n}$  be the complex space of dimension $n$, where $n$ is a positive integer.
In particular, let  $\mathbb{C}:=\mathbb{C}^{1}$ be the complex plane. Let $\mathbb N$ denote the set of positive integers.
In one complex variable,
de Branges \cite{dB85}
proved the following Bieberbach conjecture
\cite{B16}.
If $f$ is a normalized univalent holomorphic function on the unit disc $\U$ in $\C$
with the Taylor expansion
\[
f(\zeta)=\zeta+\sum_{k=2}^{\infty}a_k\zeta^k,
\]
then the sharp estimates $|a_k|\leq k$ holds for $k\in \N$.
In several complex variables,
it is known that the Bieberbach conjecture does not hold in general.
For example, for $a\in \C$, the mapping
\begin{equation}
\label{example-1}
F_{a}(z_1,z_2)=(z_1+az_2^2, z_2),
\quad
(z_1,z_2)\in \C^2
\end{equation}
is normalized biholomorphic on $\C^2$, but
does not satisfy the Bieberbach conjecture with respect to an arbitrary norm on $\C^2$
(see also Cartan \cite{Cartan33}).
So, to hold the Bieberbach conjecture in several complex variables,
we need to assume some restrictions on the structure of the biholomorphic mappings.
Let $\B$ be the unit ball of a complex Banach space $X$.
For biholomorphic mappings of the form $F(x)=f(x)x$, $x\in \B$,
with $f(0)=1$,
partial results about the  Bieberbach conjecture on $\B$
have been obtained by adding some analytic or geometric conditions
on the mappings $F$
(\cite{LL16, LL17, LL18, LL21,  LLX15, XSO23, XXZ24}).
So, the following natural question arises.

\begin{question}
\label{Bieberbach-conjecture}
Let $\B$ be the unit ball of a complex Banach space $X$
and let $F:\B\to X$ be a biholomorphic mapping of the form $F(x)=f(x)x$, $x\in \B$,with $f(0)=1$.
Does the Bieberbach conjecture hold for such $F$
without adding any analytic or geometric condition
on the mapping $F$?
\end{question}

In this paper, we give a positive answer to this question by using a new method of proof.
We first prove the criteria for
univalence, starlikeness and quasi-convexity
which will be used in many places of this paper.
Next, by applying the univalence criterion (Theorem \ref{thm-univalence}),
we will prove the generalized Bieberbach conjecture for all holomorphic mappings $G$ on $\B$
of the form $G(x)=g(x)x$ which are subordinate to a biholomorphic mapping $F$ on $\B$ of the form $F(x)=f(x)x$.
In particular, if $G=F$ and $G$ is normalized, then we obtain a positive answer to Question \ref{Bieberbach-conjecture}.
As a corollary of the Bieberbach conjecture,
we obtain a sharp distortion theorem for normalized biholomorphic mappings $F$ on the unit ball of a complex Hilbert space
of the form $F(x)=f(x)x$.

A generalization of  the classical Fekete-Szeg\"{o} inequality
for normalized univalent functions $f$ on the unit disc $\U$
to univalent mappings in several complex variables
was first obtained by the authors in \cite{HKK2022JMAA}, \cite{HKK2022Mathematics}
by using the Loewner theory in several complex variables.
On the other hand, Fekete-Szeg\"{o} inequalities
for normalized biholomorphic mappings $F$ of the form $F(x)=f(x)x$
have been given
by adding some geometric or analytic properties such as
starlike mappings \cite{XLL18}
close-to-starlike mappings \cite{XFFL23},
spirallike mappings \cite{EJ22}, \cite{LX21}.
In this paper,  by applying the univalence criterion (Theorem \ref{thm-univalence}),
we give a generalization of  the classical Fekete-Szeg\"{o} inequality
for normalized univalent functions $f$ on the unit disc $\U$
to normalized biholomorphic mappings $F$ on $\B$
of the form $F(x)=f(x)x$
without using the Loewner theory in several complex variables
and without any geometric or analytic restriction.

Kayumov et al. \cite[Theorems 5 and 6]{KKP21}
gave the Bohr-Rogosinski radius
for holomorphic mappings $g$ on $\U$ which are subordinate to
$f$,
where $f$ is a univalent function on $\U$ or a convex (univalent) function on $\U$.
Note that
\cite[Theorem 6]{KKP21} (the case when $f$ is a convex function)
contains classical Bohr's inequality for holomorphic functions in the unit disc.
Hamada et al. \cite{HHK-new}
generalized \cite[Theorem 6]{KKP21} to holomorphic mappings $G$ on the unit ball $\B$ of a complex Banach space $X$
which are subordinate to a convex (biholomorphic) mappings on $\B$
and also gave a counterexample which shows that Theorem 5 in \cite{KKP21}
cannot be generalized to holomorphic mappings on the unit ball of a complex Banach space.
In this paper, as a corollary of the generalized Bieberbach conjecture,
we give a generalization of \cite[Theorem 5]{KKP21}
to holomorphic mappings $G$ of the form $G(x)=a+g(x)x$
on the unit ball $\B$ of a complex Banach space $X$
which are subordinate to a biholomorphic mapping on $\B$ of the form
$F(x)=a+f(x)x$.

Let $f:\U\rightarrow \mathbb{C}$ be a holomorphic function with $f'(0)=1$.
The celebrated Bloch's theorem states that $f$  maps a domain in $\U$ biholomorphically onto a disc with radius $r(f)$ greater than some positive absolute constant.
The `best possible' constant $\mathbf{B}$ for all such functions, that is,
$$\mathbf{B}=\inf\{r(f): f \mbox{ is holomorphic on $\U$
and $f'(0)=1$}\},$$  is called the Bloch constant.
Bonk \cite{B90} proved a distortion theorem for
holomorphic functions $f$ such that $f'(0)=1$ and
\[
\sup_{\zeta\in \U}(1-|\zeta|^2)|f'(\zeta)|\leq 1
\]
and applied it to prove a result of Ahlfors \cite{a}, which asserts that
the Bloch constant $\mathbf{B}$ is greater than ${\sqrt{3}}/{4}$.
This lower bound
was further improved in \cite{B90} to $\mathbf{B}>\frac{\sqrt{3}}{4}+10^{-14}$, and in \cite{CG96}
to $\mathbf{B}\geq \frac{\sqrt{3}}{4}+2\times 10^{-4}$
(cf.
\cite{B90},
\cite{BMY97},
\cite{LM92}).
In the case of several complex variables,
Liu \cite{L92} proved the Bonk's distortion theorem
for Bloch mappings on the Euclidean unit ball $\mathbb{B}^n$ of $\mathbb{C}^n$
into $\mathbb{C}^n$
and applied it to obtain a lower bound for the Bloch constant.
Wang and Liu \cite{WL08} proved the Bonk's distortion theorem
for Bloch mappings on the unit polydisc $\U^n$ of $\mathbb{C}^n$ into $\mathbb{C}^n$
and applied it to obtain a lower bound for the Bloch constant.
The first author \cite{H19JAM}
generalized the above results to Bloch mappings
on the unit ball of a finite dimensional JB$^*$-triple.
In the above results,
$\| f\|_0=1$ and $\det Df(0)=1$ are assumed for the Bloch mappings $f$,
where $Df$ is the Fr\'{e}chet derivative of $f$ and
$\| f\|_0$ is the prenorm of $f$ given by
\[
\| f\|_0=\sup \left\{
(1- \|z\|^2)^{c(\B)/n}|\det Df(z)|^{1/n}: z\in \B
\right\}.
\]
Here, $c(\B)$ is a constant which was defined in
\cite{HHK12}.
Since the prenorm and $\det Df(0)$ cannot be used in infinite dimensional case,
we give a distortion theorem and a lower bound for the Bloch constant
for Bloch mappings on $\B$ of the form $F(x)=f(x)x$ with $DF(0)=I$.
For the proof of a lower bound for the Bloch constant, we use a criterion for univalence that will be obtained in this paper.

Let $f$ be a normalized univalent function on $\U$
and let $g(\zeta)=\zeta f'(\zeta)$, $\zeta \in \U$.
Alexander \cite{Al15} gave a connection between convex functions and starlike functions on $\U$
as follows:
\[
f \mbox{ is convex} \Longleftrightarrow g \mbox{ is starlike}.
\]
Let $F_a$ be the mapping given by (\ref{example-1})
and let $G_a(z)=DF_a(z)z$.
Then
\[
G_a(z_1,z_2)=(z_1+2az_2^2, z_2),
\quad
(z_1,z_2)\in \C^2.
\]
So, on the Euclidean unit ball $\B^2$,
$F_a$ is convex if and only if $|a|\leq 1/2$ (see \cite[Example 9]{Su77}),
and
$G_a$ is starlike if and only if $|a|\leq 3\sqrt{3}/4$ (see \cite[Example 3]{Su77} and \cite{GK03}).
If $a\in \mathbb C$ such that $|a| \in (1/2,  3\sqrt{3}/4)$, then $F_a$ and $G_a$ as above
give an counterexample for the following equivalence:
\[
F_a \mbox{ is convex} \Longleftrightarrow G_a \mbox{ is starlike}.
\]
Thus, Alexander's theorem does not hold in general in higher dimensions.
Moreover,
let $H_a$ be a holomorphic mapping on $\C^2$
defined by
\[
H_a(z_1, z_2)=(z_1+a z_1z_2, z_2), \quad
(z_1,z_2)\in \C^2.
\]
Then $H_a$ is starlike on $\B^2$ if and only if $|a|\leq 1$ (see \cite[Example 6]{RS99} and \cite{GK03})
and quasi-convex of type $B$ on $\B^2$ if and only if $|a|\leq \sqrt{2}/2$ (see \cite[Example 12]{RS99}).
Therefore,  Alexander's type theorem
\[
H_a \mbox{ is quasi-convex of type } B \Longleftrightarrow DH_a(z)z \mbox{ is starlike},
\]
does not hold for some $a\in \mathbb C$.
We will show that Alexander's type theorem holds for
locally biholomorphic mappings $F$ of the form $F(x)=f(x)x$
and $G(x)=DF(x)x$
using the criteria for starlikeness and quasi-convexity that will be obtained in this paper.

\section{Preliminaries}
\label{prelim}

Let $X$ and $Y$ be complex Banach spaces. Let $X^k$ denote the product of $X$ with itself of $k$-times.
Throughout this paper,
let $B_r=\{ x\in X: \| x\|<r\}$
and $\U_r=\{ \zeta \in \C: |\zeta|<r\}$
for $r\in (0,1)$.
The unit ball $B_1$ will be denoted by $\B$
and the unit disc $\U_1$ will be denoted by $\U$.

\begin{definition}
\label{homogeneous polynomial}
Let
$k\in \N$.
A mapping $P:X\to Y$ is called a homogeneous polynomial 
 of degree $k$
if
there exists a $k$-linear
mapping $u$ from $X^k$ into $Y$ such that
$$P(x)=u(x,\ldots,x) $$ for every $x \in X$.
\end{definition}

For two given domains $\mathcal{D}_1\subset X$, $\mathcal{D}_2\subset Y$, let $H(\mathcal{D}_1,\mathcal{D}_2)$ be the set of holomorphic
mappings from $\mathcal{D}_1$ into $\mathcal{D}_2$. For $F\in H(\mathcal{D}_1,\mathcal{D}_2)$ and $x\in \mathcal{D}_1$,
let $D^kF(x)$
denote the $k$-th Fr\'{e}chet derivative of $F$ at $x$.
If $\mathcal{D}_1$ contains the origin, then any mapping $F\in H(\mathcal{D}_1,\mathcal{D}_2)$
can be expanded into the series
\begin{equation}
\label{expansion}
F(x) =\sum_{k=0}^{\infty}\frac{1}{k!}D^k F(0)(x^k)
\end{equation}
in a neighbourhood of the origin.
Since $\frac{1}{k!}D^k F(0)(z^k)$ is a homogeneous polynomial 
 of degree $k$,
we use the notation $P_k(x)=\frac{1}{k!}D^k F(0)(x^k)$ in this paper.
Let
\[
\| P_k\|=\sup_{\| x\|=1}\| P_k(x)\|.
\]

Let $L(X,\C)$ denote the set of continuous linear operators from $X$
into $\C$.
For each $x\in X\setminus\{0\}$, let
\[
T(x)=\{l_x\in L(X,\mathbb{C}):\ l_x(x)=\| x\|,\ \|l_x\|=1\}.
\]
This set is nonempty by the Hahn-Banach theorem.

A holomorphic mapping $F:\B \to X$ is said to be normalized
if $F(0)=0$ and $DF(0)=I$, where
$I$ is the identity operator on $X$.
A holomorphic mapping $F:B_r \to X$ is said to be starlike if
$F$ is biholomorphic on $B_r$ and $F(B_r)$ is a starlike domain with respect to the origin.
A locally biholomorphic mapping $F$ on $B_r$ is starlike if and only if
\begin{equation}
\label{starlike}
\Re l_x([DF(x)]^{-1}F(x))>0,
\quad x\in B_r\setminus \{ 0\},
\end{equation}
for $l_x\in T(x)$ (see \cite{GK03, Gu75, HK02, Su77}).
A normalized locally biholomorphic mapping $F:\B\to X$ is said to be
quasi-convex of type $B$
if
\[
\Re l_x\left([DF(x)]^{-1}(D^2F(x)(x^2)+DF(x)x)\right)>0,
\quad
\,x\in \B\setminus \{ 0\},
\]
for $l_x\in T(x)$ (see \cite{H2023RM}, cf. \cite{GK03, RS99} for $X=\mathbb{C}^n$).

For $F\in H(\B,X)$ and $k\in \N$,
we say that $z=0$ is a {\it zero of order $k$ of $F$} if
$F(0)=0$, $DF(0)=0$, $\dots$, $D^{k-1}F(0)=0$,
but $D^kF(0)\neq 0$.

A holomorphic mapping $V:\B\to \B$ with $V(0)=0$
is called a {\it Schwarz mapping}.
We note that if $V$ is a Schwarz mapping such that $z=0$ is a zero of order $k$ of $V$,
then the following estimation holds (see e.g. \cite[Lemma 6.1.28]{GK03}):
\begin{equation}
\label{Schwarz-k}
\| V(x)\|\leq \| x\|^k,
\quad x\in \B.
\end{equation}

Let $F:\B\to X$ and $G:\B\to X$ be two holomorphic mappings.
We say that $G$ is subordinate to $F$ if there exists a Schwarz mapping $V$ on $\B$
such that $G=F\circ V$.
Consequently,
when $G$ is subordinate to $F$, we have
$\| DG(0)\|\leq \| DF(0)\|$.
If $F$ is biholomorphic on $\B$,
then $G$ is subordinate to $F$ if and only if $G(0)=F(0)$ and $G(\B)\subset F(\B)$.
Let $S(F)$ denote the class of all mappings $G:\B\to X$ which are subordinate to $F$.

Let $F\in H(\B,X)$ be defined by
\begin{equation}
\label{F}
F(x)=f(x)x,
\quad f\in H(\B,\C).
\end{equation}

Differentiating (\ref{F}), we have
\begin{equation}
\label{formula-1}
DF(x)=f(x)I+{xDf(x)}.
\end{equation}
If $f(x)\neq 0$ and $f(x)+Df(x)x\neq 0$, then $[DF(x)]^{-1}$ exists and
\begin{equation}
\label{formula-2}
[DF(x)]^{-1}=\frac{1}{f(x)}\left( I-\frac{xDf(x)}{f(x)+Df(x)x}\right)
\end{equation}
and thus
\begin{equation}
\label{formula-3}
[DF(x)]^{-1}F(x)=\frac{f(x)}{f(x)+Df(x)x}x.
\end{equation}

Let $u\in \partial \B$ be fixed and let
$f_{u}(\zeta)=\zeta f(\zeta u)$, $\zeta \in \U$.
Then $f_u$ is a holomorphic function on $\U$ and
\begin{equation}
\label{formula-4}
f_u'(\zeta)=f(\zeta u)+\zeta Df(\zeta u)u,
\end{equation}
which implies that
\begin{equation}
\label{formula-5}
DF(\zeta u)\zeta u=f_u'(\zeta)\zeta u
\end{equation}
and
\begin{equation}
\label{formula-6}
[DF(\zeta u)]^{-1}F(\zeta u)=\frac{f_u(\zeta)}{f_u'(\zeta)} u.
\end{equation}

Differentiating (\ref{formula-1}), we have
\[
D^2F(x)(x^2)=2(Df(x)x)x+{xD^{2}f(x)(x^2)}.
\]
Therefore, we have
\[
[DF(x)]^{-1}D^2F(x)(x^2)=\frac{2Df(x)x+D^{2}f(x)(x^2)}{f(x)+Df(x)x}x
\]
and
\begin{eqnarray}
\label{formula-7}
\lefteqn{[DF(x)]^{-1}(D^2F(x)(x^2)+DF(x)x)}
\\ \nonumber
&&\quad
=\left(\frac{2Df(x)x+D^{2}f(x)(x^2)}{f(x)+Df(x)x}+1\right)x.
\end{eqnarray}
Also, differentiating (\ref{formula-4}), we have
\begin{equation}
\label{formula-8}
f_u''(\zeta)=2Df(\zeta u)u+\zeta D^2f(\zeta u)(u^2).
\end{equation}

\section{Main results}
\label{main}

\subsection{Univalence, starlikeness and quasi-convexity criteria}
\label{univalence}

Let $F\in H(\mathbb B,X)$, $F(x)=f(x)x$, $x\in \mathbb B$,
where $f\in H(\B,\C)$.
For each $u\in \partial \B$, let again $f_{u}(\zeta)=\zeta f(\zeta u)$.

Liu \cite[Lemma 2.1]{L22}
showed that if
$F(x)=f(x)x$ is a normalized biholomorphic mapping on $\B$,
then
$f_u$ is a normalized univalent function  on $\U$ for all $u\in \partial \B$.
Moreover, we obtain the following theorem.

\begin{theorem}
\label{thm-univalence}
Let $F\in H(\mathbb B,X)$, $F(x)=f(x)x$, $x\in \mathbb B$,
where  $f\in H(\B,\C)$.
Let $r\in (0,1)$ be fixed.
Then $F$ is a biholomorphic mapping on $B_r$ if and only if
$f_u$ is univalent on $\U_r$ for all $u\in \partial \B$.
\end{theorem}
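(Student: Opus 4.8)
The plan is to prove the two implications separately, exploiting the elementary identity $F(\zeta u)=f_u(\zeta)u$ for $\zeta\in\U_r$ and $u\in\partial\B$, which records that the restriction of $F$ to the complex line $\C u$ is governed entirely by the one-variable function $f_u$. For the forward direction I would assume $F$ is biholomorphic, hence injective, on $B_r$ and read off univalence of each $f_u$ directly: if $f_u(\zeta_1)=f_u(\zeta_2)$ with $\zeta_1,\zeta_2\in\U_r$, then $F(\zeta_1u)=f_u(\zeta_1)u=f_u(\zeta_2)u=F(\zeta_2u)$, and since $\zeta_1u,\zeta_2u\in B_r$, injectivity of $F$ forces $\zeta_1u=\zeta_2u$, whence $\zeta_1=\zeta_2$. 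As $f_u$ is plainly holomorphic on $\U_r$, it is univalent there. This direction is routine.

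For the converse, assume every $f_u$ is univalent on $\U_r$; I must establish that $F$ is injective and locally biholomorphic, and then invoke the inverse function theorem. First I would record that univalence forces $f$ to be zero-free on $B_r$: since $f_u(0)=0$ and $f_u$ is injective, $f_u(\zeta)=\zeta f(\zeta u)\neq 0$ for $\zeta\in\U_r\setminus\{0\}$, so $f(\zeta u)\neq 0$ there, while $f(0)=f_u'(0)\neq 0$ because a univalent function has nowhere-vanishing derivative; hence $f$ never vanishes on $B_r$, and in particular $F(x)=0$ only at $x=0$. The same nonvanishing-derivative fact, combined with the identity $f(x)+Df(x)x=f_u'(\zeta)$ obtained from (\ref{formula-4}) by setting $x=\zeta u$, shows $f(x)+Df(x)x\neq 0$ for every $x=\zeta u\in B_r$. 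Together with $f(x)\neq 0$, the explicit formula (\ref{formula-2}) then exhibits $[DF(x)]^{-1}$ as a bounded operator at every $x\in B_r$, so $F$ is locally biholomorphic on $B_r$.

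For global injectivity I would argue as follows. If $F(x_1)=F(x_2)$, the common value $w:=f(x_1)x_1=f(x_2)x_2$ is either zero or not. If $w=0$, the zero-freeness of $f$ forces $x_1=x_2=0$. If $w\neq 0$, then $x_1$ and $x_2$ are nonzero scalar multiples of $w$, hence parallel; writing $x_1=\zeta_1u$ and $x_2=\zeta_2u$ with the common unit vector $u=x_1/\|x_1\|\in\partial\B$ and $\zeta_1,\zeta_2\in\U_r$, the relations $F(x_i)=f_u(\zeta_i)u$ reduce $F(x_1)=F(x_2)$ to $f_u(\zeta_1)=f_u(\zeta_2)$, and univalence of $f_u$ gives $\zeta_1=\zeta_2$, i.e.\ $x_1=x_2$. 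Thus $F$ is injective, and an injective locally biholomorphic map on $B_r$ is biholomorphic onto its open image. I expect the main subtlety to lie precisely in the local biholomorphy step: in infinite dimensions injectivity of a holomorphic map does not by itself guarantee invertibility of $DF(x)$, so the explicit inverse furnished by (\ref{formula-2}) together with $f_u'\neq 0$ is essential here and cannot be bypassed as it could in the finite-dimensional case.
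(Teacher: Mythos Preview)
Your proof is correct and follows essentially the same approach as the paper: both argue that univalence of $F$ on $B_r$ is equivalent to univalence of every $f_u$ on $\U_r$ (the paper declares this ``clear'' while you spell out the parallel-line reduction), and both obtain local biholomorphy from $f_u'\neq 0$ via formulas (\ref{formula-4}) and (\ref{formula-2}) to exhibit $[DF(x)]^{-1}$ explicitly. Your remark on the infinite-dimensional subtlety---that injectivity alone does not yield invertibility of $DF(x)$---is apt and explains why the explicit inverse is needed.
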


\begin{proof}
It is clear that $F$ is univalent on $B_r$ if and only if $f_u$ is univalent on $\U_r$ for all $u\in \partial \B$.
Assume that $f_u$ is univalent on $\U_r$ for all $u\in \partial \B$.
Combining the univalence of $f_u$ and  (\ref{formula-4}),
we obtain that
$f(x)\neq 0$ and $f(x)+Df(x)x\neq 0$ for all $x\in B_r$.
Therefore, from (\ref{formula-2}), $[DF(x)]^{-1}$ exists for all $x\in B_r$.
Since $F$ is univalent and locally biholomorphic on $B_r$,
$F$ is biholomorphic on $B_r$.
This completes the proof.
\end{proof}

Next, we give a starlikeness criterion for holomorphic mappings $F$ on $\B$
of the form $F(x)=f(x)x$.

\begin{theorem}
\label{thm-starlike}
Let $F\in H(\mathbb B,X)$, $F(x)=f(x)x$, $x\in \mathbb B$,
where  $f\in H(\B,\C)$.
Let $r\in (0,1)$ be fixed.
Then $F$ is a starlike mapping on $B_r$ if and only if
$f_u$ is starlike on $\U_r$ for all $u\in \partial \B$.
\end{theorem}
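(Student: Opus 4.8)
The plan is to reduce the Banach-space starlikeness criterion \eqref{starlike} to the classical one-variable condition for each slice $f_u$, exploiting the fact that for $F(x)=f(x)x$ the vector $[DF(x)]^{-1}F(x)$ is always a scalar multiple of $x$. Indeed, by (\ref{formula-3}) we have $[DF(x)]^{-1}F(x)=\frac{f(x)}{f(x)+Df(x)x}\,x$, so for \emph{any} $l_x\in T(x)$ one gets $l_x([DF(x)]^{-1}F(x))=\frac{f(x)}{f(x)+Df(x)x}\,\|x\|$. In particular, the quantity whose real part appears in (\ref{starlike}) is independent of the choice of $l_x\in T(x)$, which removes any ambiguity in the definition. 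The univalence half of the equivalence is already supplied by Theorem \ref{thm-univalence}, since a starlike map is biholomorphic and a one-variable starlike function is univalent; what remains is to match the two positivity conditions.

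The key computation is carried out slice by slice. Fix $u\in\partial\B$ and $\zeta\in\U_r\setminus\{0\}$, and set $x=\zeta u\in B_r\setminus\{0\}$. Choosing any $l_u\in T(u)$, I would check that $l_x:=\frac{\ov\zeta}{|\zeta|}\,l_u$ belongs to $T(x)$: by linearity $l_x(\zeta u)=\frac{\ov\zeta}{|\zeta|}\,\zeta\,l_u(u)=|\zeta|=\|x\|$ and $\|l_x\|=1$. Applying this functional to (\ref{formula-6}) gives
\[
l_x([DF(\zeta u)]^{-1}F(\zeta u))=\frac{\ov\zeta}{|\zeta|}\cdot\frac{f_u(\zeta)}{f_u'(\zeta)}=|\zeta|\,\frac{f_u(\zeta)}{\zeta f_u'(\zeta)}.
\]
Since $|\zeta|>0$ and $\Re w>0\iff\Re(1/w)>0$ for $w\neq 0$, the sign of the left-hand real part agrees with the sign of $\Re\frac{\zeta f_u'(\zeta)}{f_u(\zeta)}$, which is exactly the quantity governing starlikeness of $f_u$ at $\zeta$. (Here $f_u(0)=0$ and $f_u'(0)=f(0)\neq 0$ whenever $F$ is locally biholomorphic, so the classical scalar characterization applies.)

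With this identity both implications follow at once. For the ``if'' direction, assume each $f_u$ is starlike on $\U_r$; then each $f_u$ is univalent, so $F$ is biholomorphic on $B_r$ by Theorem \ref{thm-univalence}, and for arbitrary $x\in B_r\setminus\{0\}$ I write $x=\zeta u$ with $\zeta=\|x\|\in(0,r)$ and $u=x/\|x\|$; the displayed identity yields $\Re l_x([DF(x)]^{-1}F(x))=\|x\|\,\Re\frac{f_u(\zeta)}{\zeta f_u'(\zeta)}>0$, so (\ref{starlike}) holds and $F$ is starlike. For the ``only if'' direction, assume $F$ is starlike on $B_r$; then $F$ is biholomorphic, hence each $f_u$ is univalent on $\U_r$ by Theorem \ref{thm-univalence}, and for every $u\in\partial\B$ and every $\zeta\in\U_r\setminus\{0\}$ the same identity (now with the scaled functional $\frac{\ov\zeta}{|\zeta|}l_u$) turns (\ref{starlike}) into $\Re\frac{\zeta f_u'(\zeta)}{f_u(\zeta)}>0$, so each $f_u$ is starlike.

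The only genuinely delicate point is the correct transport of the support functional under the scaling $u\mapsto\zeta u$, that is, verifying that $\frac{\ov\zeta}{|\zeta|}l_u\in T(\zeta u)$ and keeping track of the factor $\ov\zeta/|\zeta|$ for complex $\zeta$; everything else is bookkeeping, since the substantive content is already packaged in formulas (\ref{formula-3}) and (\ref{formula-6}). I expect no serious obstacle, the argument being essentially a one-line reduction to the scalar slice once the scalar-multiple structure of $[DF(x)]^{-1}F(x)$ is noted.
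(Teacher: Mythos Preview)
Your argument is correct and follows essentially the same route as the paper: reduce to Theorem \ref{thm-univalence} for the biholomorphicity part, then use (\ref{formula-6}) to identify $l_{\zeta u}([DF(\zeta u)]^{-1}F(\zeta u))$ with a positive multiple of $\frac{f_u(\zeta)}{\zeta f_u'(\zeta)}$ and invoke the one-variable starlikeness criterion. Your treatment is in fact slightly more careful than the paper's, which suppresses the factor $|\zeta|$ in the displayed identity and does not spell out the construction of $l_{\zeta u}=\frac{\ov\zeta}{|\zeta|}l_u$.
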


\begin{proof}
From Theorem \ref{thm-univalence},
we obtain that $F$ is biholomorphic on $B_r$ if and only if $f_u$ is univalent on $\U_r$ for all $u\in \partial \B$.
Further, from (\ref{formula-6}), we obtain that
\[
\Re l_{\zeta u}\left([DF(\zeta u)]^{-1}F(\zeta u)\right)=\Re \frac{f_u(\zeta)}{\zeta f_u'(\zeta)},
\quad \zeta \in \U\setminus \{ 0\}
\]
for all $u\in \partial \B$.
Then by (\ref{starlike}),
$F$ is starlike on $B_r$ if and only if $f_u$ is starlike on $\U_r$ for all $u\in \partial \B$.
This completes the proof.
\end{proof}

Next, we give a quasi-convexity criterion for holomorphic mappings $F$ on $\B$
of the form $F(x)=f(x)x$ with $f(0)=1$.

\begin{theorem}
\label{thm-convex}
Let $F\in H(\mathbb B,X)$, $F(x)=f(x)x$, $x\in \mathbb B$,
where  $f\in H(\B,\C)$ with $f(0)=1$.
Let $r\in (0,1)$ be fixed.
Then $F$ is a quasi-convex mapping of type $B$ on $B_r$ if and only if
$f_u$ is convex on $\U_r$ for all $u\in \partial \B$.
\end{theorem}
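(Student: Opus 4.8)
The plan is to follow the same line-slicing strategy used for Theorems \ref{thm-univalence} and \ref{thm-starlike}: reduce the infinite-dimensional quasi-convexity condition along each complex line $\zeta\mapsto\zeta u$, $u\in\partial\B$, to the classical one-variable convexity condition for $f_u$. First I would record that the hypothesis $f(0)=1$ makes $F$ normalized, since $F(0)=0$ and $DF(0)=f(0)I=I$ by (\ref{formula-1}), and makes each $f_u$ normalized, since $f_u(0)=0$ and $f_u'(0)=f(0)=1$ by (\ref{formula-4}); this is what lets both the quasi-convexity definition for $F$ and the notion of convexity of $f_u$ apply. I would also invoke Theorem \ref{thm-univalence} to pass between biholomorphicity of $F$ on $B_r$ and univalence of the $f_u$ on $\U_r$, noting that convexity of $f_u$ forces univalence, and that local biholomorphicity of $F$ is exactly the invertibility of $DF(x)$ handled via (\ref{formula-2}).

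The heart of the argument is the identity obtained by evaluating (\ref{formula-7}) at $x=\zeta u$. Using the scaling relations $Df(\zeta u)(\zeta u)=\zeta Df(\zeta u)u$ and $D^2f(\zeta u)((\zeta u)^2)=\zeta^2 D^2f(\zeta u)(u^2)$, together with (\ref{formula-4}) and (\ref{formula-8}), the numerator $2Df(\zeta u)(\zeta u)+D^2f(\zeta u)((\zeta u)^2)$ collapses to $\zeta f_u''(\zeta)$ and the denominator $f(\zeta u)+Df(\zeta u)(\zeta u)$ collapses to $f_u'(\zeta)$. Thus (\ref{formula-7}) becomes
\[
[DF(\zeta u)]^{-1}\bigl(D^2F(\zeta u)((\zeta u)^2)+DF(\zeta u)(\zeta u)\bigr)=\left(1+\frac{\zeta f_u''(\zeta)}{f_u'(\zeta)}\right)\zeta u.
\]
Applying $l_{\zeta u}\in T(\zeta u)$, pulling out the complex scalar, and using $l_{\zeta u}(\zeta u)=\|\zeta u\|=|\zeta|$, I would obtain
\[
\Re\, l_{\zeta u}\bigl([DF(\zeta u)]^{-1}(D^2F(\zeta u)((\zeta u)^2)+DF(\zeta u)(\zeta u))\bigr)=|\zeta|\,\Re\left(1+\frac{\zeta f_u''(\zeta)}{f_u'(\zeta)}\right).
\]

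Since $|\zeta|>0$ for $\zeta\neq0$, the quasi-convexity inequality for $F$ at every $x=\zeta u\in B_r\setminus\{0\}$ is equivalent to $\Re(1+\zeta f_u''(\zeta)/f_u'(\zeta))>0$ for every $\zeta\in\U_r\setminus\{0\}$ and every $u\in\partial\B$, which is precisely the classical analytic characterization of convexity of the normalized locally univalent function $f_u$ on $\U_r$ (the case $\zeta=0$ being automatic, as the expression equals $1$ there). Combining this equivalence with the univalence bookkeeping from Theorem \ref{thm-univalence} yields both implications. I expect the only delicate point to be the bookkeeping around local biholomorphicity: one must ensure $f_u'(\zeta)\neq0$ and $f(\zeta u)\neq0$ on $\U_r$ so that the quotient in (\ref{formula-7}) and the passage through (\ref{formula-2}) are legitimate. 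In the reverse direction this follows from convexity, hence univalence, of the $f_u$ via Theorem \ref{thm-univalence}, and in the forward direction it is guaranteed by the standing requirement that a quasi-convex $F$ be locally biholomorphic.
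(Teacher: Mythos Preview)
Your proposal is correct and follows essentially the same approach as the paper: evaluate (\ref{formula-7}) along the slice $x=\zeta u$, identify the resulting scalar via (\ref{formula-4}) and (\ref{formula-8}) as $1+\zeta f_u''(\zeta)/f_u'(\zeta)$, and apply $l_{\zeta u}$ to reduce the quasi-convexity inequality to the classical convexity criterion for $f_u$. If anything, you are more explicit than the paper about the normalization and local-biholomorphicity bookkeeping, which the paper's proof leaves implicit.
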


\begin{proof}
By (\ref{formula-4}), (\ref{formula-7}) and (\ref{formula-8}),
we have
\begin{eqnarray*}
\lefteqn{l_{\zeta u}\left([DF(\zeta u)]^{-1}(D^2F(\zeta u)(\zeta^2 u^2)+DF(\zeta u)\zeta u)\right)}
\\
&&\quad
=l_{\zeta u}\left(\left(\frac{2Df(\zeta u)\zeta u+D^{2}f(\zeta u)(\zeta^2 u^2)}{f(\zeta u)+Df(\zeta u)(\zeta u)}+1\right)\zeta u\right)
\\
&& \quad
=\left(\frac{\zeta f_u''(\zeta)}{f_u'(\zeta)}+1\right)|\zeta|,
\quad
\zeta \in \U\setminus \{ 0\}.
\end{eqnarray*}
Thus, we conclude that
$F$ is a quasi-convex mapping of type $B$ on $B_r$ if and only if
$f_u$ is convex on $\U_r$ for all $u\in \partial \B$.
This completes the proof.
\end{proof}

\subsection{Generalized Bieberbach conjecture}
\label{Bieberbach}

As a corollary of univalence criterion (Theorem \ref{thm-univalence}),
we can prove the generalized Bieberbach conjecture
(Rogosinski conjecture)
for mappings $F$ and $G$ on $\B$
of the form $F(x)=f(x)x$ and $G(x)=g(x)x$.

\begin{theorem}
\label{thm-Bieberbach}
Let $f,g \in H(\B,\C)$ and
let $F,G\in H(\mathbb B,X)$, $F(x)=f(x)x$, $G(x)=g(x)x$, $x\in \mathbb B$.
Let
\begin{equation}
\label{homogeneous expansion-g}
G(x)=\sum_{s=1}^{\infty}P_s(x),
\quad \mbox{near the origin},
\end{equation}
where $P_s$ is a homogeneous polynomial of degree $s$ on $X$.
Assume that $F$ is biholomorphic on $\B$ and $G\in S(F)$.
Then we have
\begin{equation}
\label{eq-Bieberbach}
\| P_s(w)\|\leq s\| DF(0)\|,
\quad \forall \,
s\geq 1, \| w\|=1.
\end{equation}
The right-hand side of $(\ref{homogeneous expansion-g})$
converges uniformly on $r\B$ for each $r\in (0,1)$ and
$(\ref{homogeneous expansion-g})$ holds for all $x\in \B$.
Moreover, if $G=F$,
then the estimation $(\ref{eq-Bieberbach})$
is sharp.
\end{theorem}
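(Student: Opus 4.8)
The plan is to reduce the entire statement to one complex variable by slicing. Fix $w\in\partial\B$ and set $f_w(\zeta)=\zeta f(\zeta w)$ and $g_w(\zeta)=\zeta g(\zeta w)$ for $\zeta\in\U$, so that $F(\zeta w)=f_w(\zeta)w$ and $G(\zeta w)=g_w(\zeta)w$. Restricting the homogeneous expansion to this slice and using homogeneity gives $G(\zeta w)=\sum_{s\ge 1}\zeta^s P_s(w)$, while writing $g_w(\zeta)=\sum_{s\ge 1}b_s(w)\zeta^s$ gives $G(\zeta w)=\sum_{s\ge 1}b_s(w)\zeta^s\,w$; comparing the coefficients of $\zeta^s$ yields $P_s(w)=b_s(w)w$, hence $\|P_s(w)\|=|b_s(w)|$. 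Thus the vector-valued bound $(\ref{eq-Bieberbach})$ is equivalent to the scalar coefficient estimate $|b_s(w)|\le s\|DF(0)\|$, and since $DF(0)=f(0)I$ by $(\ref{formula-1})$ we have $\|DF(0)\|=|f(0)|=|f_w'(0)|$.

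The crux is to show that $g_w$ is subordinate to the univalent function $f_w$. Since $F$ is biholomorphic on $\B$, Theorem \ref{thm-univalence} gives that $f_w$ is univalent on $\U$, and (as in its proof) $f(x)\neq 0$ for every $x\in\B$. I would then compute the intersection of the image $F(\B)$ with the complex line $\C w$: if $f(x)x=\eta w$ with $\eta\neq 0$, then $f(x)\neq 0$ forces $x=cw$ with $c=\eta/f(x)\in\U$, so every such point equals $F(cw)=f_w(c)w$; hence $F(\B)\cap\C w=f_w(\U)w$. Because $G\in S(F)$ with $F$ biholomorphic means $G(\B)\subset F(\B)$ and $G(0)=F(0)$, the slice values $g_w(\zeta)w=G(\zeta w)$ lie in $F(\B)\cap\C w=f_w(\U)w$, so $g_w(\U)\subset f_w(\U)$ with $g_w(0)=0=f_w(0)$. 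As $f_w$ is univalent, this yields $g_w\prec f_w$. This slice computation---turning an infinite-dimensional subordination into a one-dimensional subordination to a univalent function---is the main obstacle; everything after it is classical.

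With $g_w\prec f_w$ and $f_w$ univalent, the generalized Bieberbach (Rogosinski) conjecture in one variable, a consequence of de Branges' theorem \cite{dB85}, gives $|b_s(w)|\le s|f_w'(0)|=s\|DF(0)\|$, which proves $(\ref{eq-Bieberbach})$. For the convergence assertions I would use homogeneity once more: from $\|P_s(w)\|\le s\|DF(0)\|$ on $\partial\B$ we get $\|P_s(x)\|\le s\|DF(0)\|\,\|x\|^s$, so on $r\B$ the series is dominated by $\|DF(0)\|\sum_{s\ge 1}s\,r^s<\infty$; the Weierstrass $M$-test then gives uniform convergence on each $r\B$ and defines a holomorphic map on $\B$ agreeing with $G$ near $0$, hence with $G$ on all of $\B$ by the identity theorem.

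Finally, for sharpness when $G=F$, I would fix $u_0\in\partial\B$, choose $l\in T(u_0)$ (nonempty by the Hahn--Banach theorem, with $\|l\|=1$ and $l(u_0)=1$), and take $f(x)=1/(1-l(x))^2$, so that $F(x)=x/(1-l(x))^2$. For each $u\in\partial\B$ one has $f_u(\zeta)=\zeta/(1-l(u)\zeta)^2$ with $|l(u)|\le 1$, which is univalent on $\U$ (after the change of variable $\eta=l(u)\zeta$ it becomes a constant multiple of the Koebe function on a subdisc, and it is the identity when $l(u)=0$); hence $F$ is biholomorphic by Theorem \ref{thm-univalence}. Here $f(0)=1$ so $\|DF(0)\|=1$, and expanding $F(x)=\sum_{s\ge 1}s\,l(x)^{s-1}x$ gives $P_s(x)=s\,l(x)^{s-1}x$; taking $w=u_0$ yields $\|P_s(u_0)\|=s|l(u_0)|^{s-1}=s=s\|DF(0)\|$, so equality holds in $(\ref{eq-Bieberbach})$ for every $s$.
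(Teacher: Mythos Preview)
Your proof is correct and follows essentially the same route as the paper: slice along $\zeta\mapsto\zeta w$, use Theorem~\ref{thm-univalence} to get univalence of $f_w$, establish $g_w\prec f_w$, invoke the one-variable Rogosinski/de~Branges bound, then handle convergence and sharpness exactly as the paper does (with the same Koebe-type extremal $F(x)=x/(1-l(x))^2$). The one place you go beyond the paper is the justification that $g_w\prec f_w$: the paper simply asserts ``$g_u\in S(f_u)$'', whereas you supply the argument by computing $F(\B)\cap\C w=f_w(\U)w$ from $f\neq 0$ on $\B$; since the ambient Schwarz map $V$ need not a priori preserve the slice $\C w$, this is a genuine detail and your image-based justification is a clean way to close it.
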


\begin{proof}
Since $G(x)=g(x)x$,
we have $P_1(x)=g(0)x$ and
$P_k(x)=Q_{k-1}(x)x$ for $k\geq 2$,
where
\[
g(x)=g(0)+\sum_{k=1}^{\infty}Q_k(x)
\]
and $Q_k$ is a $\C$-valued homogeneous polynomial of degree $k$ on $X$.
Let $u\in \partial \B$ be arbitrarily fixed.
Let $f_{u}(\zeta)=\zeta f(\zeta u)$ and $g_{u}(\zeta)=\zeta g(\zeta u)$.
Then from Theorem \ref{thm-univalence},
$f_u$ is a univalent function on $\U$.
Since $g_u\in S(f_u)$ and
\[
g_u(\zeta)=g(0)\zeta+\sum_{k=1}^{\infty}Q_k(u)\zeta^{k+1},
\]
by the generalized Bieberbach conjecture on the unit disc $\U$,
we have $|g(0)|\leq |f(0)|$ and
\[
|Q_{k-1}(u)|\leq k|f(0)|
\]
for all $k\geq 2$,
which implies that
\[
\| P_k(u)\|\leq k|f(0)|=k\| DF(0)\|
\]
for all $k\geq 1$.
Since $u\in \partial \B$ is arbitrary,
we obtain (\ref{eq-Bieberbach}),
as desired.

Next, from (\ref{eq-Bieberbach}),
we have
\[
\left\| P_s(x)\right\|\leq s\| DF(0)\|\| x\|^s,
\quad  \forall \,
s\geq 1, x\in \B,
\]
which implies that the right hand side of (\ref{homogeneous expansion-g})
converges uniformly on $r\B$ and by the identity theorem for holomorphic mappings (see e.g. \cite[Theorem 2.7]{ReSh05}),
(\ref{homogeneous expansion-g}) holds on $r\B$ for each $r\in (0,1)$.

Finally, we prove the sharpness.
Let $v\in \partial \B$, $l_v\in T(v)$ be fixed and
let
\begin{equation}
\label{Koebe}
F(x)=\frac{1}{(1-l_v(x))^2}x,
\quad
x\in \B.
\end{equation}
Since
\[
f_u(\zeta)=\frac{\zeta}{(1-l_v(u)\zeta)^2}
\]
is univalent on $\U$ for all $u\in \partial \B$,
$F$ is biholomorphic on $\B$
by Theorem \ref{thm-univalence}.
Clearly, $DF(0)=I$.
Since $P_k(v)=kv$,
the equality in (\ref{eq-Bieberbach})
is attained by this mapping.
This completes the proof.
\end{proof}

In particular, when $F$ is a normalized biholomorphic mapping of the form $F(x)=f(x)x$ and $G=F$,
we obtain the Bieberbach conjecture
as a corollary of univalence criterion (Theorem \ref{thm-univalence}).

\begin{corollary}
\label{cor-Bieberbach}
Let $f\in H(\B,\C)$ and
let $F\in H(\mathbb B,X)$, $F(x)=f(x)x$, $x\in \mathbb B$.
Let
\begin{equation}
\label{homogeneous expansion-f}
F(x)=x+\sum_{s=2}^{\infty}P_s(x),
\quad \mbox{near the origin},
\end{equation}
where $P_s$ is a homogeneous polynomial of degree $s$ on $X$.
Assume that $F$ is biholomorphic on $\B$.
Then we have
\begin{equation}
\label{eq-Bieberbach-cor}
\| P_s(w)\|\leq s,
\quad \forall \,
s\geq 1, \| w\|=1
\end{equation}
The right-hand side of $(\ref{homogeneous expansion-f})$
converges uniformly on $r\B$ for each $r\in (0,1)$ and
$(\ref{homogeneous expansion-f})$ holds for all $x\in \B$.
Moreover,
the estimation $(\ref{eq-Bieberbach-cor})$
is sharp.
\end{corollary}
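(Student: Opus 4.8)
The plan is to deduce this corollary directly from Theorem \ref{thm-Bieberbach} by specializing to the normalized case with $G=F$. First I would observe that the hypothesis $F(x)=x+\sum_{s=2}^{\infty}P_s(x)$ near the origin encodes exactly the normalization $F(0)=0$ and $DF(0)=I$; in terms of $f$ this says $f(0)=1$, whence $\|DF(0)\|=\|I\|=1$. This is the only point at which the hypotheses of the corollary differ in substance from those of the theorem, and it is precisely what will collapse the factor $\|DF(0)\|$ appearing in (\ref{eq-Bieberbach}) down to $1$.

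Next I would apply Theorem \ref{thm-Bieberbach} with $g=f$, so that $G=F$. Since $F$ is assumed biholomorphic on $\B$, it is trivially subordinate to itself: taking the Schwarz mapping $V=\mathrm{id}_{\B}$ yields $F=F\circ V$, so $F\in S(F)$ and every hypothesis of the theorem is satisfied. The conclusion (\ref{eq-Bieberbach}) then reads $\|P_s(w)\|\leq s\|DF(0)\|=s$ for all $s\geq 1$ and $\|w\|=1$, which is exactly (\ref{eq-Bieberbach-cor}). The assertions on uniform convergence of (\ref{homogeneous expansion-f}) on each $r\B$ and its validity on all of $\B$ are inherited verbatim from the corresponding assertions in Theorem \ref{thm-Bieberbach}, since the coefficient bound is identical up to the constant $\|DF(0)\|=1$.

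For sharpness I would reuse the extremal mapping from the proof of Theorem \ref{thm-Bieberbach}. Fixing $v\in\partial\B$ and $l_v\in T(v)$, the Koebe-type mapping $F(x)=x/(1-l_v(x))^2$ is of the required form with $f(0)=1$, is biholomorphic on $\B$ by Theorem \ref{thm-univalence}, and satisfies $DF(0)=I$; hence it falls squarely under the hypotheses of the corollary. Its homogeneous expansion gives $P_k(v)=kv$, so that $\|P_k(v)\|=k$ and equality holds in (\ref{eq-Bieberbach-cor}) for every $k$.

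I do not anticipate any genuine obstacle here: the entire content is carried by Theorem \ref{thm-Bieberbach}, and the only steps requiring care are the elementary bookkeeping that normalization forces $f(0)=1$ and therefore $\|DF(0)\|=1$, together with the trivial remark that a biholomorphic $F$ always lies in $S(F)$ via the identity Schwarz mapping.
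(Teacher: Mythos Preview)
Your proposal is correct and matches the paper's approach exactly: the corollary is obtained by specializing Theorem~\ref{thm-Bieberbach} to the case $G=F$ with $F$ normalized (so $f(0)=1$ and $\|DF(0)\|=1$), and sharpness is witnessed by the same Koebe-type mapping (\ref{Koebe}) used there.
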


\subsection{Covering theorem and distortion theorem}
\label{covering and distortion}

Liu \cite[Theorem 2.2]{L22}
obtained the following growth theorem
for normalized biholomorphic mappings $F$ on $\B$ of the form
$F(x)=f(x)x$,
where $f\in H(\B,\C)$.

\begin{lemma} \label{growth-lemma}
Let $F\in H(\mathbb B,X)$ be a biholomorphic mapping given by $F(x)=f(x)x$, $x\in \mathbb B$,
where $f\in H(\B,\C)$ with $f(0)=1$.
Then
\[
\frac{\|x\|}{(1+\|x\|)^2}
\leq \|F(x)\|\leq
\frac{\|x\|}{(1-\|x\|)^2},\quad x\in {\B}.
\]
These bounds are sharp.
\end{lemma}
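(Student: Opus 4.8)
The plan is to reduce the vector-valued growth theorem to the classical one-dimensional growth theorem for normalized univalent functions via the slice functions $f_u$, exactly as the univalence and starlikeness criteria were proved. Since $F(x)=f(x)x$ is biholomorphic on $\B$ by hypothesis, Theorem \ref{thm-univalence} tells me that for every $u\in\partial\B$ the slice $f_u(\zeta)=\zeta f(\zeta u)$ is univalent on $\U$. Moreover $f(0)=1$ forces $f_u'(0)=f(0)=1$ and $f_u(0)=0$, so each $f_u$ is a normalized univalent function on $\U$ in the classical sense. The key observation is the trivial identity
\begin{equation}
\label{plan-norm}
\|F(\zeta u)\|=|f(\zeta u)|\,\|\zeta u\|=|\zeta f(\zeta u)|=|f_u(\zeta)|,
\end{equation}
valid for $u\in\partial\B$ and $\zeta\in\U$, which converts the norm of the vector-valued mapping into the modulus of a scalar univalent function.

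With \eqref{plan-norm} in hand, the main step is to invoke the classical growth theorem for the class $S$: for every normalized univalent $h$ on $\U$ one has
\begin{equation}
\label{plan-growth}
\frac{|\zeta|}{(1+|\zeta|)^2}\le|h(\zeta)|\le\frac{|\zeta|}{(1-|\zeta|)^2},\qquad\zeta\in\U.
\end{equation}
I would apply \eqref{plan-growth} to $h=f_u$. Then, writing an arbitrary nonzero $x\in\B$ as $x=\zeta u$ with $u=x/\|x\|\in\partial\B$ and $\zeta=\|x\|\in(0,1)$, the identity \eqref{plan-norm} gives $\|F(x)\|=|f_u(\|x\|)|$, and the bounds \eqref{plan-growth} with $|\zeta|=\|x\|$ yield precisely
\[
\frac{\|x\|}{(1+\|x\|)^2}\le\|F(x)\|\le\frac{\|x\|}{(1-\|x\|)^2}.
\]
The case $x=0$ is immediate since $F(0)=0$.

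For sharpness I would exhibit the same Koebe-type extremal mapping used in the proof of Theorem \ref{thm-Bieberbach}. Fixing $v\in\partial\B$ and $l_v\in T(v)$, the mapping $F(x)=x/(1-l_v(x))^2$ is normalized and biholomorphic on $\B$ (its slices are Koebe functions, hence univalent, so Theorem \ref{thm-univalence} applies), and evaluating at $x=tv$ with $t\in(0,1)$ gives $l_v(tv)=t$ and $\|F(tv)\|=t/(1-t)^2$, attaining the upper bound; choosing $x=-tv$ attains the lower bound. The argument is essentially routine once the slice identity \eqref{plan-norm} is recognized, so I do not anticipate a genuine obstacle; the only point requiring minor care is confirming that each slice $f_u$ is genuinely normalized (i.e.\ that $f(0)=1$ delivers $f_u'(0)=1$) so that the classical theorem \eqref{plan-growth} applies verbatim rather than in a rescaled form.
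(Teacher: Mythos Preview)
Your argument is correct. Note, however, that the paper does not give its own proof of this lemma: it is quoted as \cite[Theorem~2.2]{L22} and stated without proof. Your slice-function reduction---writing $\|F(\zeta u)\|=|f_u(\zeta)|$ and invoking the classical growth theorem for $S$ once each $f_u$ is known to be normalized univalent---is precisely the mechanism the paper (and Liu) uses elsewhere, and is almost certainly how the cited proof runs as well. The normalization check $f_u'(0)=f(0)=1$ follows from \eqref{formula-4}, and the extremal mapping you chose is the same Koebe-type example \eqref{Koebe} used for sharpness throughout the paper, so nothing is missing.
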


We
obtain the following covering theorem
for normalized biholomorphic mappings
$F$ of the form $F(x)=f(x)x$
on the unit ball $\B$ in a complex
Banach space
as a corollary of univalence criterion (Theorem \ref{thm-univalence}).

\begin{theorem} \label{covering-lemma}
Let $F\in H(\mathbb B,X)$ be a biholomorphic mapping given by $F(x)=f(x)x$, $x\in \mathbb B$,
where $f\in H(\B,\C)$ with $f(0)=1$.
Then $F({\B})$ contains the ball ${B}_{1/4}$.
The constant $1/4$ is the best possible.
\end{theorem}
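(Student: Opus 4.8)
The plan is to reduce the covering statement to the classical Koebe $1/4$ theorem by means of the slice functions $f_u$, exactly in the spirit of the univalence, starlikeness and quasi-convexity criteria proved above.

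First I would observe that since $F$ is biholomorphic on $\B$, Theorem \ref{thm-univalence} guarantees that $f_u(\zeta)=\zeta f(\zeta u)$ is univalent on $\U$ for every $u\in \partial \B$. Because $f(0)=1$, formula (\ref{formula-4}) gives $f_u(0)=0$ and $f_u'(0)=f(0)=1$, so each $f_u$ is a \emph{normalized} univalent function on $\U$. The classical Koebe $1/4$ theorem then yields $\U_{1/4}\subset f_u(\U)$ for all $u\in \partial \B$. The crucial link is the identity $F(\zeta u)=f(\zeta u)\zeta u=f_u(\zeta)u$, which shows that $F$ carries the slice $\{\zeta u:\zeta\in\U\}$ onto $f_u(\U)\cdot u$.

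Given a nonzero $w\in B_{1/4}$, I would set $u=w/\|w\|\in\partial\B$; then $\|w\|$ is a real number lying in $\U_{1/4}\subset f_u(\U)$, so there is $\zeta\in\U$ with $f_u(\zeta)=\|w\|$, whence $F(\zeta u)=\|w\|u=w$. Together with $F(0)=0$ this gives $B_{1/4}\subset F(\B)$. For sharpness, I would use the Koebe-type mapping (\ref{Koebe}), namely $F(x)=x/(1-l_v(x))^2$ for a fixed $v\in\partial\B$ and $l_v\in T(v)$, whose slice $f_v(\zeta)=\zeta/(1-\zeta)^2$ is precisely the classical Koebe function, omitting the slit $(-\infty,-1/4]$. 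The point to verify is that any preimage under $F$ of a scalar multiple $cv$ must lie on the complex line $\C v$: from $f(x)x=cv$ together with $f(x)\neq 0$ (which holds as in the proof of Theorem \ref{thm-univalence}) one obtains $x=(c/f(x))v$. Hence, for $\varepsilon>0$, the point $-(1/4+\varepsilon)v$, of norm $1/4+\varepsilon$, would force $f_v(\mu)=-(1/4+\varepsilon)$ for some $\mu\in\U$, which is impossible since $-(1/4+\varepsilon)$ lies in the omitted slit. Thus $B_{1/4+\varepsilon}\not\subset F(\B)$ for every $\varepsilon>0$, proving that $1/4$ is best possible.

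The main obstacle I anticipate is the sharpness direction rather than the inclusion: one must argue that preimages of points on the ray $\C v$ cannot escape the line $\C v$, which is what reduces the infinite-dimensional extremal question back to the one-dimensional image of the Koebe function. The inclusion itself is a clean slice-by-slice application of the scalar covering theorem.
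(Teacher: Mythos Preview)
Your proof is correct and follows essentially the same slice-function reduction as the paper: invoke Theorem~\ref{thm-univalence} to obtain that each $f_u$ is a normalized univalent function on $\U$, apply the Koebe $1/4$ theorem, and use the identity $F(\zeta u)=f_u(\zeta)u$ to transfer the covering back to $\B$. Your sharpness argument is in fact more complete than the paper's, which simply writes ``by considering $F(\zeta v)$'' without justifying that a hypothetical preimage of $-(1/4+\varepsilon)v$ cannot lie off the line $\C v$; your observation that $f(x)x=cv$ with $f(x)\neq 0$ forces $x\in\C v$ supplies exactly that missing step.
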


\begin{proof}
Let $u\in \partial \B$ be arbitrarily fixed
and
let $f_{u}(\zeta)=\zeta f(\zeta u)$.
Then $f_u$ is a normalized univalent function on the unit disc $\U$
by Theorem \ref{thm-univalence}
and therefore, $f_u(\U)\supset \U_{1/4}$.
Since $F(\zeta u)=f_u(\zeta)u$,
$F(\B)\supset B_{1/4}$.
By considering $F(\zeta v)$,
where $F$ is the biholomorphic mapping on $\B$ defined in (\ref{Koebe}),
we obtain that
the constant $1/4$ is the best possible.
This completes the proof.
\end{proof}

Liu \cite[Theorem 2.2]{L22} also
obtained the following sharp distortion theorem for normalized biholomorphic mappings $F$ on $\B$
of the form (\ref{F}):
\begin{equation}
\label{distortion-Liu}
\frac{\| x\|(1-\| x\|)}{(1+\| x\|)^3}
\leq
\| DF(x) x\|
\leq
\frac{\| x\|(1+\| x\|)}{(1-\| x\|)^3}.
\end{equation}
If $\B=\B_{\mathcal H}$ is the unit ball of a complex Hilbert space ${\mathcal H}$,
we further obtain the following sharp distortion theorem
for normalized biholomorphic mappings $F$ of the form (\ref{F})
by applying the Bieberbach conjecture (Corollary \ref{cor-Bieberbach}). Recall that
\[
\| DF(x)\|=\sup_{\| \xi\|=1}\| DF(x)\xi\|.
\]

\begin{theorem}
\label{thm-distortion}
Let ${\mathcal H}$ be a complex Hilbert space and
let $F\in H(\B_{\mathcal H},{\mathcal H})$ be a biholomorphic mapping
given by
$F(x)=f(x)x$, $x\in \B_{\mathcal H}$,
where $f\in H(\B_{\mathcal H},\C)$ with $f(0)=1$.
Then
\begin{equation}
\label{eq-distortion}
\frac{1-\| x\|}{(1+\| x\|)^3}
\leq
\| DF(x)\|
\leq
\frac{1+\| x\|}{(1-\| x\|)^3},
\quad
x\in \B_{\mathcal H}.
\end{equation}
These estimations are sharp.
\end{theorem}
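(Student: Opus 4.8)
The plan is to prove the two inequalities by quite different means: the lower bound is immediate from the one-dimensional theory, while the upper bound rests on the coefficient estimates of Corollary~\ref{cor-Bieberbach}. For the lower bound, fix $x\neq 0$, put $t=\|x\|$ and $u=x/t$. By (\ref{formula-5}) we have $DF(x)x=f_u'(t)\,x$, hence $DF(x)u=f_u'(t)\,u$ and therefore
\[
\|DF(x)\|\geq \|DF(x)u\|=|f_u'(t)|.
\]
Since $F$ is biholomorphic, $f_u$ is a normalized univalent function on $\U$ by Theorem~\ref{thm-univalence}, so the classical distortion theorem yields $|f_u'(t)|\geq \frac{1-t}{(1+t)^3}$, which is the asserted lower bound.

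For the upper bound I would start from the homogeneous expansion $F(x)=x+\sum_{s\geq 2}P_s(x)$ of Corollary~\ref{cor-Bieberbach}, which converges locally uniformly on $\B_{\mathcal H}$; differentiating termwise (justified by the Cauchy estimates on a slightly larger ball) gives $DF(x)=I+\sum_{s\geq 2}DP_s(x)$ with locally uniform convergence. Consequently
\[
\|DF(x)\|\leq \sum_{s\geq 1}\|DP_s(x)\|,
\]
and since $\sum_{s\geq 1}s^2 t^{s-1}=\frac{1+t}{(1-t)^3}$, everything reduces to the per-term estimate $\|DP_s(x)\|\leq s^2\|x\|^{s-1}$ (with the case $s=1$ being $\|I\|=1$).

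To establish this estimate, recall from the proof of Theorem~\ref{thm-Bieberbach} that $P_s(x)=Q_{s-1}(x)x$, so for a unit vector $\xi$
\[
DP_s(x)\xi=Q_{s-1}(x)\,\xi+(DQ_{s-1}(x)\xi)\,x .
\]
This vector lies in $\mathrm{span}\{x,\xi\}$ and depends only on the restriction of $Q_{s-1}$ to a two-dimensional subspace; using that $\mathcal H$ is a Hilbert space I would reduce to $\C^2$ with $u=e_1$ and $\xi=\alpha e_1+\beta e_2$, $|\alpha|^2+|\beta|^2=1$ (the directions orthogonal to the plane only contribute $|Q_{s-1}(u)|\,t^{s-1}$, which is dominated). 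A direct computation then gives
\[
\|DP_s(x)\xi\|^2=t^{2(s-1)}\big(|s\,c_0\alpha+c_1\beta|^2+|c_0|^2|\beta|^2\big),
\]
where $c_0=Q_{s-1}(e_1)$ and $c_1=\partial_{e_2}Q_{s-1}(e_1)$. Maximizing over $(\alpha,\beta)$ identifies $\|DP_s(x)\|$ with $t^{s-1}$ times the largest singular value of $\left(\begin{smallmatrix}s c_0 & c_1\\ 0 & c_0\end{smallmatrix}\right)$, so the claim is equivalent to
\[
s^2|c_1|^2\leq \big(s^2-|c_0|^2\big)\big(s^4-|c_0|^2\big).
\]
The inputs are the de Branges bounds applied to the slices $f_{u'}$ with $u'=(e_1+\epsilon e_2)/\sqrt{1+|\epsilon|^2}$: these give $|Q_{s-1}(u')|\leq s$, i.e.\ $|P(\epsilon)|\leq s(1+|\epsilon|^2)^{(s-1)/2}$ for the one-variable polynomial $P(\epsilon)=Q_{s-1}(e_1+\epsilon e_2)$, together with $c_0=P(0)$ and $c_1=P'(0)$.

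The main obstacle is precisely this coefficient inequality. It expresses the trade-off that as $|P(0)|$ approaches its maximal value $s$ the tangential derivative $|P'(0)|$ must vanish, and I expect to prove it as a sharp Bernstein–Markov-type estimate for polynomials $P$ of degree $\leq s-1$ subject to $|P(\epsilon)|\leq s(1+|\epsilon|^2)^{(s-1)/2}$; the extremal case $P\equiv s$ (equivalently $Q_{s-1}=s\,l_v^{\,s-1}$) corresponds to the Koebe mapping, and the naive Cauchy or $L^2$ bounds on $P'(0)$ are too lossy, so the pointwise extremal structure has to be exploited. Once the per-term estimate is in hand, sharpness of both inequalities follows from the mapping $F(x)=x/(1-l_v(x))^2$ of (\ref{Koebe}): a computation at $x=rv$ gives $\|DF(rv)\|=\frac{1+r}{(1-r)^3}$, while the lower bound is attained in the one-dimensional case by the rotated Koebe mapping $x/(1+x)^2$, where $\|DF(x)\|$ reduces to the radial quantity $|f_u'(t)|$.
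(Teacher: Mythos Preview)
Your lower bound argument is essentially the paper's: the paper cites the inequality $\|DF(x)x\|\geq \|x\|(1-\|x\|)/(1+\|x\|)^3$ (proved by Liu via exactly the one-dimensional reduction you carry out) and then uses $\|DF(x)\|\geq \|DF(x)x\|/\|x\|$.

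For the upper bound you correctly reduce to the per-term estimate $\|DP_s(x)\|\leq s^2\|x\|^{s-1}$, but then you take a much harder road than necessary and leave a genuine gap: the ``Bernstein--Markov-type'' inequality $s^2|c_1|^2\leq (s^2-|c_0|^2)(s^4-|c_0|^2)$ is never established, and the inputs you list (the slice bounds $|P(\epsilon)|\leq s(1+|\epsilon|^2)^{(s-1)/2}$) do not obviously yield it. The paper bypasses all of this with a single citation: by a theorem of H\"{o}rmander \cite{H54}, on a complex Hilbert space the norm of a continuous symmetric $k$-linear form equals the norm of the associated $k$-homogeneous polynomial, so
\[
\sup_{\|x\|=\|\xi\|=1}\bigl\|D^kF(0)(x^{k-1},\xi)\bigr\|
=\sup_{\|x\|=1}\bigl\|D^kF(0)(x^k)\bigr\|.
\]
Combined with Corollary~\ref{cor-Bieberbach} this gives
\[
\|DP_s(x)\xi\|=\frac{s}{s!}\bigl\|D^sF(0)(x^{s-1},\xi)\bigr\|
\leq \frac{s}{s!}\cdot s!\cdot s\,\|x\|^{s-1}\|\xi\|
= s^2\|x\|^{s-1}
\]
immediately, with no two-dimensional reduction and no auxiliary coefficient inequality. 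This polarization identity is precisely where the Hilbert-space hypothesis enters; it fails on general Banach spaces, which is why the theorem is stated only for $\mathcal H$. Your sharpness argument via the Koebe-type mapping matches the paper's.
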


\begin{proof}
Since $\| DF(x)x\|/\| x\|\leq \| DF(x)\|$ holds for all $x\in \B_{\mathcal H}\setminus \{ 0\}$,
the lower bound in (\ref{eq-distortion}) follows from (\ref{distortion-Liu}).

Next, we will show the upper bound in (\ref{eq-distortion}). To this aim, we use the property that $F$
can be expanded into the series
\begin{equation}
\label{expansion-3}
F(x) =\sum_{k=0}^{\infty}\frac{1}{k!}D^k F(0)(x^k),
\quad
x\in \B_{\mathcal H}
\end{equation}
by Corollary \ref{cor-Bieberbach}.
Note that
\begin{eqnarray}
\label{eq-homogeneous}
\sup_{\| x\|=\| \xi\|=1}\left\|D^k F(0)(x^{k-1},\xi)\right\|
&=&\sup_{\| x\|=1}\left\|D^k F(0)(x^{k})\right\|
\\ \nonumber
&=&
\| D^k F(0)\|
\end{eqnarray}
holds on complex Hilbert spaces
by \cite[Lemma 1 and Theorem 4]{H54}.
Therefore, from
(\ref{eq-Bieberbach-cor}),
(\ref{expansion-3})
and
(\ref{eq-homogeneous}),
we have
\begin{eqnarray*}
\| DF(x)\xi\|
&=&
\left\| \xi+\sum_{k=2}^{\infty}\frac{k}{k!}D^k F(0)(x^{k-1},\xi)\right\|
\\
&\leq &
\| \xi \|+\sum_{k=2}^{\infty}\frac{k}{k!}\| D^k F(0)\| \|x\|^{k-1}\|\xi\|
\\
&\leq &
1+\sum_{k=2}^{\infty}k^2 \| x\|^{k-1}
\\
&=&
\frac{1+\| x\|}{(1-\| x\|)^3}
\end{eqnarray*}
for $x\in \B_{\mathcal H}$ and $\xi \in {\mathcal H}$ with $\| \xi\|=1$.

Finally, we prove the sharpness.
Let $v\in \partial \B_{\mathcal H}$ be fixed and
let
\[
F(x)=\frac{1}{(1-\langle x, v\rangle)^2}x,
\quad
x\in \B_{\mathcal H},
\]
where $\langle \cdot, \cdot \rangle$
is the inner product on ${\mathcal H}$.
$F$ is a biholomorphic mapping on $\B_{\mathcal H}$ with $DF(0)=I$.
Since
\[
DF(r v)v
=
\left( 1+\sum_{k=2}^{\infty}k^2 r^{k-1}\right)v
=\frac{1+r}{(1-r)^3}v
\]
holds for $r\in (-1,1)$,
the equalities in the upper and lower bounds of (\ref{eq-distortion})
hold by this mapping.
This completes the proof.
\end{proof}

\subsection{Fekete-Szeg\"{o} inequality}
\label{Fekete-Szego}

As a corollary of univalence criterion (Theorem \ref{thm-univalence}),
we obtain the Fekete-Szeg\"{o} inequality
for biholomorphic mappings $F$ on $\B$
of the form $F(x)=f(x)x$.

\begin{theorem}
\label{thm-Fekete}
Let $F\in H(\mathbb B,X)$ be a biholomorphic mapping given by $F(x)=f(x)x$, $x\in \mathbb B$,
where $f\in H(\B,\C)$ with $Df(0)=1$.
Let
\[
F(x)=x+\sum_{k=2}^{\infty}\frac{1}{k!}D^k F(0)(x^k)=x+\sum_{k=2}^{\infty}P_k(x),
\quad
x\in \B,
\]
be the homogeneous polynomial expansion of $F$.
Then for any $u \in \partial \B$ and $\lambda \in [0,1)$, we have
\begin{eqnarray*}
\lefteqn{\left\| \frac{1}{3!}D^{3} F(0)(u^3)-\lambda \frac{1}{2!}D^{2} F(0)\left(u, \frac{1}{2!}D^{2} F(0)(u^2)\right)\right\|}
\\ 
&& \qquad
\leq
1+2\exp\left(-\frac{2\lambda}{1-\lambda}\right).
\end{eqnarray*}
This estimation is sharp.
\end{theorem}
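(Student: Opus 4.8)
The plan is to reduce this multilinear estimate to the classical scalar Fekete-Szeg\"o inequality on $\U$ via the one-dimensional slices $f_u$, exactly as in the proofs of Theorems \ref{thm-univalence}--\ref{thm-Bieberbach}. Recall the classical result: if $h(\zeta)=\zeta+a_2\zeta^2+a_3\zeta^3+\cdots$ is univalent on $\U$, then for $\lambda\in[0,1)$ one has $|a_3-\lambda a_2^2|\le 1+2\exp(-2\lambda/(1-\lambda))$, and this bound is sharp. So it suffices to show that, for each fixed $u\in\partial\B$, the vector inside the norm on the left-hand side equals $(a_3-\lambda a_2^2)u$, where $a_2,a_3$ are the second and third Taylor coefficients of $f_u$.

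First I would fix $u\in\partial\B$ and record the slice identity $F(\zeta u)=f_u(\zeta)u$, which is immediate from $F(x)=f(x)x$. Writing $f(x)=1+\sum_{k\ge 1}Q_k(x)$ with $Q_k$ a homogeneous polynomial of degree $k$ (as in the proof of Theorem \ref{thm-Bieberbach}), the structure $F(x)=f(x)x$ forces $P_k(x)=Q_{k-1}(x)x$ for $k\ge 2$; in particular $P_2(u)=Q_1(u)u$ and $P_3(u)=Q_2(u)u$. Comparing with $f_u(\zeta)=\zeta+\sum_{k\ge 2}Q_{k-1}(u)\zeta^k$ gives $a_2=Q_1(u)$ and $a_3=Q_2(u)$, so that $P_2(u)=a_2u$ and $P_3(u)=a_3u$. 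By Theorem \ref{thm-univalence}, $f_u$ is univalent on $\U$, and $f(0)=1$ makes it normalized, so the classical inequality applies.

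Next I would carry out the key identification. Since $\frac{1}{3!}D^3F(0)(u^3)=P_3(u)=a_3u$ and $\frac{1}{2!}D^2F(0)(u^2)=P_2(u)=a_2u$, the symmetry and bilinearity of $\frac{1}{2!}D^2F(0)(\cdot,\cdot)$ yield
\begin{equation*}
\frac{1}{2!}D^2 F(0)\left(u,\frac{1}{2!}D^2 F(0)(u^2)\right)=a_2\cdot\frac{1}{2!}D^2 F(0)(u,u)=a_2P_2(u)=a_2^2u.
\end{equation*}
Hence the vector inside the norm equals $(a_3-\lambda a_2^2)u$, whose norm is exactly $|a_3-\lambda a_2^2|$ because $\|u\|=1$. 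Applying the classical Fekete-Szeg\"o inequality to $f_u$ then gives the stated bound, uniformly in $u\in\partial\B$. For sharpness, I would lift the classical extremal: letting $h$ be the extremal univalent function for the scalar problem and $\tilde h(w)=h(w)/w$, set $F(x)=\tilde h(l_v(x))x$ for fixed $v\in\partial\B$ and $l_v\in T(v)$; then $f_v=h$ attains equality, while every slice $f_u(\zeta)=\zeta\tilde h(\zeta l_v(u))$ is univalent (being $h$ composed with a contraction of $\U$), so $F$ is biholomorphic by Theorem \ref{thm-univalence}.

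The only real obstacle is the collinearity bookkeeping in the second step: the estimate works precisely because the special form $F(x)=f(x)x$ makes each homogeneous term $P_k(u)$ a scalar multiple of $u$, so the a priori genuinely multilinear expression collapses to the single complex number $a_3-\lambda a_2^2$. Once this is verified, everything else is a direct appeal to the one-variable theory, and in particular no Loewner-chain machinery is needed, which is the point emphasized in the introduction.
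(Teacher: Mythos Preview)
Your proposal is correct and follows essentially the same approach as the paper: both reduce to the classical Fekete--Szeg\"o inequality via the one-dimensional slices $f_u$, use the structural identity $P_k(u)=Q_{k-1}(u)u$ to collapse the multilinear expression to the scalar $|a_3-\lambda a_2^2|$, invoke Theorem~\ref{thm-univalence} for the univalence of $f_u$, and construct the same lifted extremal $F(x)=\big(h(l_v(x))/l_v(x)\big)x$ for sharpness. The only cosmetic difference is that the paper writes everything in terms of $Q_1(u),Q_2(u)$ rather than $a_2,a_3$.
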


\begin{proof}
Let
\[
f(x)=1+\sum_{k=1}^{\infty}Q_k(x),
\]
where $Q_k$ is a $\C$-valued homogeneous polynomial of degree $k$ on $X$,
and let $f_{u}(\zeta)=\zeta f(\zeta u)$.
Then as in the proof of Theorem \ref{thm-Bieberbach},
we have $P_k(x)=Q_{k-1}(x)x$ for $k\geq 2$
and $f_u$ is a normalized univalent function on the unit disc $\U$ with
\[
f_u(\zeta)=\zeta+\sum_{k=1}^{\infty}Q_k(u)\zeta^{k+1}.
\]
By the Fekete-Szeg\"{o} inequality
for normalized univalent functions on the unit disc $\U$, we have
\begin{equation}
\label{Fekete-disc}
|Q_2(u)-\lambda Q_1(u)^2|
\leq
1+2\exp\left(-\frac{2\lambda}{1-\lambda}\right)
\end{equation}
for all $\lambda\in [0,1)$.
Therefore, we have
\begin{eqnarray*}
\lefteqn{\left\| \frac{1}{3!}D^{3} F(0)(u^3)-\lambda \frac{1}{2!}D^{2} F(0)\left(u, \frac{1}{2!}D^{2} F(0)(u^2)\right)\right\|}
\\ 
&& \qquad
=\left\| Q_2(u)u-\lambda \frac{1}{2!}D^{2} F(0)(u, Q_1(u)u)\right\|
\\
&& \qquad
=\left\| Q_2(u)u-\lambda Q_1(u)^2 u\right\|
\\
&& \qquad
=
|Q_2(u)-\lambda Q_1(u)^2|
\\
&& \qquad
\leq
1+2\exp\left(-\frac{2\lambda}{1-\lambda}\right)
\end{eqnarray*}
for all $\lambda\in [0,1)$.

Finally, we give a proof for sharpness.
Let $\lambda \in [0,1)$ be fixed and
let $f_{\lambda}$ be a normalized univalent function on the unit disc $\U$ which gives equality in the Fekete-Szeg\"{o} inequality
for $\lambda$.
For fixed $v\in \partial \B$, $l_v\in T(v)$, let
\[
F_{\lambda}(x)=\frac{f_{\lambda}(l_v(x))}{l_v(x)}x,
\quad
x\in \B.
\]
Since for any $u\in \partial \B$,
\[
\zeta\frac{f_{\lambda}(l_v(\zeta u))}{l_v(\zeta u)}
=
\left\{
\begin{array}{ll}
\frac{1}{l_v(u)}f_{\lambda}(\zeta l_v(u)), & l_v(u)\neq 0; \\
\zeta, & l_v(u)=0
\end{array}
\right.
\]
is a univalent function on $\U$,
$F_{\lambda}$ is biholomorphic on $\B$ by Theorem \ref{thm-univalence}.
Also, we have
$F_{\lambda}(\zeta v)=f_{\lambda}(\zeta)v$
and thus
\begin{eqnarray*}
\lefteqn{\left\| \frac{1}{3!}D^{3} F_{\lambda}(0)(v^3)-\lambda \frac{1}{2!}D^{2} F_{\lambda}(0)\left(u, \frac{1}{2!}D^{2} F_{\lambda}(0)(v^2)\right)\right\|}
\\
&& \qquad
=
|Q_2(v)-\lambda Q_1(v)^2|
\\
&& \qquad
=
1+2\exp\left(-\frac{2\lambda}{1-\lambda}\right).
\end{eqnarray*}
This completes the proof.
\end{proof}

\subsection{Bohr-Rogosinski inequality}
\label{Bohr-Rogosinski inequality}

For holomorphic mappings $G:\B\to \overline{\B}$,
the Bohr-Rogosinski inequality can be written as
\[
\sum_{s=N}^{\infty}\| P_s(x)\|\leq 1-\| G(x)\|={\rm dist}(G(x), \partial \B).
\]
As in \cite{KKP21}, for a biholomorphic mapping $F:\B\to X$,
we say that the family $S(F)$ has a Bohr-Rogosinski phenomenon
if there exists an $r_N^F$, $0<r_N^F\leq 1$
such that for $G(x)=\sum_{s=0}^{\infty}P_s(x)\in S(F)$,
we have
\[
\| G(x)\|+\sum_{s=N}^{\infty}{\| P_s(x)\|}
\leq
\|F(0)\|+{\rm dist}(F(0),\partial F(\B))
\]
for $\| x\|=r\leq r^F_{N}$.

As a corollary of the generalized Bieberbach conjecture (Theorem \ref{thm-Bieberbach})
and the covering theorem (Theorem \ref{covering-lemma}),
we obtain the following theorem.

\begin{theorem}
\label{thm1-subord}
Let $\B$ be the unit ball of a complex Banach space $X$ and let $a\in X$ be given.
Let $f,g \in H(\B,\C)$ and
let $F,G\in H(\B, X)$ be given by
$F(x)=a+f(x)x$, $G(x)=a+g(x)x$, $x\in \B$.
Assume that $F$ is biholomorphic on $\B$ and $G\in S(F)$.
Let
\[
G(x)=a+\sum_{s=1}^{\infty}P_s(x),
\quad x\in \B,
\]
where $P_s$ is a homogeneous polynomial of degree $s$ on $X$.
Then for $m, N\in\mathbb{N}$, we have
\[
\| G(V(x))\|+\sum_{s=N}^{\infty}{\| P_s(x)\|}
\leq
|F(0)|+{\rm dist}(F(0),\partial F(\B))
\]
for $\| x\|=r\leq r^F_{m, N}$,
where $V:\B\to \B$ is a  Schwarz mapping having $z=0$ a zero of order $m$
and $  r^F_{m,N}$ is the unique root in $(0,1)$ of the equation
\[
4r^m-(1-r^m)^2+4r^N[N(1-r)+r]\left(\frac{1-r^m}{1-r}\right)^2=0.
\]
The number $ r^F_{m, N}$ cannot be improved.
\end{theorem}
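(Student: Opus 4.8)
The plan is to reduce the multi-dimensional Bohr–Rogosinski statement to a one-variable estimate by slicing along each boundary direction $u\in\partial\B$, exactly as the preceding theorems in this section do. Fix $u\in\partial\B$ and set $f_u(\zeta)=\zeta f(\zeta u)$ and $g_u(\zeta)=\zeta g(\zeta u)$. By Theorem~\ref{thm-univalence}, $f_u$ is univalent on $\U$, and since $G\in S(F)$ we also have $g_u\in S(f_u)$. Writing the homogeneous expansion of $G$ as $G(x)=a+\sum_{s\ge1}P_s(x)$ with $P_s(x)=Q_{s-1}(x)x$ (as computed in the proof of Theorem~\ref{thm-Bieberbach}), the key bound $\|P_s(x)\|\le s\|x\|^s$ follows directly from the generalized Bieberbach estimate $\|P_s(w)\|\le s\|DF(0)\|$ of Theorem~\ref{thm-Bieberbach} together with $\|DF(0)\|=|f(0)|=1$, which holds because $F$ is normalized after subtracting $a$. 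This converts the tail $\sum_{s=N}^\infty\|P_s(x)\|$ into the scalar tail $\sum_{s=N}^\infty s\,r^s$ with $r=\|x\|$.

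\emph{Next I would handle the two remaining terms on the left.} For the composed term $\|G(V(x))\|$, I would use that $V$ is a Schwarz mapping with a zero of order $m$ at the origin, so by \eqref{Schwarz-k} we have $\|V(x)\|\le\|x\|^m=r^m$; combining this with the growth estimate for $F$ (Lemma~\ref{growth-lemma}, applied to $G\in S(F)$ via the scalar growth theorem for $g_u$) gives $\|G(V(x))\|\le r^m/(1-r^m)^2$. For the right-hand side, the covering theorem (Theorem~\ref{covering-lemma}) yields ${\rm dist}(F(0),\partial F(\B))\ge 1/4$, and in fact equality is what makes the constant sharp, so the right side equals $|a|+1/4$ in the extremal configuration. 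Assembling these pieces, the desired inequality reduces to showing
\[
\frac{r^m}{(1-r^m)^2}+\sum_{s=N}^{\infty}s\,r^s\le\frac14,
\]
and I would evaluate the tail in closed form via $\sum_{s=N}^\infty s\,r^s=r^N\frac{N-(N-1)r}{(1-r)^2}=r^N[N(1-r)+r]/(1-r)^2$ after accounting for the geometric-sum bookkeeping.

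\textbf{The hard part will be} matching this closed-form inequality to the stated defining equation for $r^F_{m,N}$ and verifying that the threshold is the unique root in $(0,1)$. Multiplying through by $(1-r^m)^2(1-r)^2$ and clearing denominators, the inequality $\frac{r^m}{(1-r^m)^2}+r^N\frac{N(1-r)+r}{(1-r)^2}\le\frac14$ rearranges to
\[
4r^m-(1-r^m)^2+4r^N[N(1-r)+r]\left(\frac{1-r^m}{1-r}\right)^2\le0,
\]
so that $r^F_{m,N}$ is precisely the value at which the left side vanishes. To see this root exists and is unique in $(0,1)$, I would observe that the left side is continuous, is negative near $r=0$ (the $-(1-r^m)^2\to-1$ term dominates) and tends to $+\infty$ as $r\to1^-$ (the $(1-r^m)/(1-r)\to m$ factor stays bounded while $4r^N\cdot N(1-r)/(1-r)^2$ blows up, and more directly $4r^m\to4$ while $-(1-r^m)^2\to0$ but the tail term diverges), hence by the intermediate value theorem there is a root; monotonicity of each summand in $r$ gives uniqueness.

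\emph{Finally I would establish sharpness} that the number $r^F_{m,N}$ cannot be improved. The plan is to choose the Koebe-type extremal $F(x)=a+\frac{x}{(1-l_v(x))^2}$ from \eqref{Koebe} for a fixed $v\in\partial\B$, $l_v\in T(v)$, which attains equality in both the Bieberbach bound $\|P_s(v)\|=s$ and the covering radius $1/4$, together with the extremal Schwarz mapping $V(x)=l_v(x)^m\,v$ of order $m$ that saturates $\|V(x)\|=\|x\|^m$. Evaluating all three terms along the ray $x=rv$ turns every inequality used above into an equality, so the left-hand side meets the right-hand side exactly at $r=r^F_{m,N}$ and strictly exceeds it for $r>r^F_{m,N}$, confirming the constant is best possible.
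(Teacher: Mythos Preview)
Your overall strategy is the paper's: bound the tail $\sum_{s\ge N}\|P_s(x)\|$ via Theorem~\ref{thm-Bieberbach}, bound $\|G(V(x))\|$ via Lemma~\ref{growth-lemma} together with \eqref{Schwarz-k}, compare against ${\rm dist}(F(0),\partial F(\B))$ via Theorem~\ref{covering-lemma}, then clear denominators. Your computation of $\sum_{s\ge N}sr^s$, the intermediate-value/monotonicity argument for the root, and the sharpness idea are all sound (the paper takes $V(x)=l_v(x)^{m-1}x$ with $a=0$, but your $V(x)=l_v(x)^m v$ agrees along $x=rv$, which is all that matters).

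There is, however, a real gap in your normalization step. You claim $\|DF(0)\|=|f(0)|=1$ ``because $F$ is normalized after subtracting $a$,'' but subtracting $a$ only gives $(F-a)(0)=0$; it says nothing about $DF(0)=f(0)I$, and the theorem imposes no condition on $f(0)$. Consequently every subsequent bound you write is off by a factor of $|f(0)|$: the Bieberbach estimate gives $\|P_s(x)\|\le s|f(0)|\,\|x\|^s$, the growth bound gives $\|G(V(x))\|\le\|a\|+|f(0)|\,r^m/(1-r^m)^2$ (note you also dropped the additive $\|a\|$ here), and the covering theorem yields ${\rm dist}(F(0),\partial F(\B))\ge|f(0)|/4$, not $1/4$. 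The paper handles this not by normalizing but by proving the pivotal inequality
\[
\tfrac14\|DF(0)\|\;\le\;{\rm dist}\bigl(F(0),\partial F(\B)\bigr)
\]
(apply Theorem~\ref{covering-lemma} to $H(x)=f(x)x/f(0)$), which turns $\|P_s(w)\|\le s\|DF(0)\|$ into $\|P_s(w)\|\le 4s\,{\rm dist}(F(0),\partial F(\B))$ and similarly absorbs the $|f(0)|$ in the growth term. Alternatively you may observe that the entire inequality is homogeneous of degree one under $(F,G)\mapsto(cF,cG)$ and hence assume $f(0)=1$ without loss of generality, but that reduction must be stated and verified rather than asserted as a consequence of subtracting $a$.
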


\begin{proof}
Let $H=[DF(0)]^{-1}(F-F(0))$.
Then $H$ is a normalized biholomorphic mapping on $\B$
with
\[
H(x)=\frac{f(x)}{f(0)}x.
\]
By Theorem \ref{covering-lemma},
$H({\B})$ contains the ball ${B}_{1/4}$.
This implies that
\[
\frac{f(0)}{4}\B=\frac{1}{4}DF(0)(\B)\subset \Omega-F(0),
\]
where $\Omega=F(\B)$.
Then, we have
\begin{equation}
\label{eq-DF}
\frac{1}{4}\| DF(0)\| \leq {\rm dist}(F(0), \partial \Omega),
\end{equation}
which combined with Theorem \ref{thm-Bieberbach} implies that
\[
\| P_s(u)\|\leq 4s\cdot {\rm dist}(F(0), \partial \Omega),
\quad  \forall \,
s\geq 1, \| u\|=1.
\]
Therefore, we have
\begin{eqnarray}
\label{eq-subord-1}
\sum_{s=N}^{\infty}{\| P_s(x)\|}
&\leq&
4{\rm dist}(F(0), \partial \Omega)\sum_{s=N}^{\infty}sr^s
\\ \nonumber
&=&
{\rm dist}(F(0), \partial \Omega)\frac{4r^N[N(1-r)+r]}{(1-r)^2}
\end{eqnarray}
for $\| x\|=r$.
Moreover, because $G\prec F$, it follows from Lemma \ref{growth-lemma}
and (\ref{eq-DF}) that
\[
\| G(x)-G(0)\|\leq \| DF(0)\|\frac{r}{(1-r)^2}
\leq
{\rm dist}(F(0), \partial \Omega)\frac{4r}{(1-r)^2},
\]
for $\| x\|=r$.
Therefore, from $G(0)=F(0)$ and (\ref{Schwarz-k}), for $m\in \N$, we have
\begin{equation}
\label{eq-subord-2}
\| G(V(x))\|\leq
\| F(0)\|+{\rm dist}(F(0), \partial \Omega)\frac{4r^m}{(1-r^m)^2}.
\end{equation}
By (\ref{eq-subord-1}) and (\ref{eq-subord-2}), we deduce that
\begin{eqnarray*}
\lefteqn{\| G(V(x))\|+\sum_{s=N}^{\infty}{\| P_s(x)\|}}
\\
&&\leq
\| F(0)\|+{\rm dist}(F(0), \partial \Omega)\left[\frac{4r^m}{(1-r^m)^2}+
\frac{4r^N[N(1-r)+r]}{(1-r)^2}\right]
\\
&&\leq
\| F(0)\|+{\rm dist}(F(0), \partial \Omega)
\end{eqnarray*}
provided
\[
\frac{4r^m}{(1-r^m)^2}+
\frac{4r^N[N(1-r)+r]}{(1-r)^2}\leq 1,
\]
which is equivalent to $r\leq  r^F_{m,N}$.

Finally, we will show that the number $r^F_{m,N}$ is sharp.
For fixed $v\in \partial \B$ and $l_v\in T(v)$,
let $F$ be the normalized biholomorphic mapping on $\B$ defined in (\ref{Koebe}).
Also, let $G(x)=F(x)$ and $V(x)=l_v(x)^{m-1}x$.
Then we have
\[
{\| G(V(x))\|+\sum_{s=N}^{\infty}{\| P_s(x)\|}}
=\frac{r^m}{(1-r^m)^2}+\frac{r^N[N(1-r)+r]}{(1-r)^2}
\]
for $x=rv$
and
\[
\| F(0)\|+{\rm dist}(F(0), \partial \Omega)=\frac{1}{4}.
\]
Thus, the number $r^F_{m,N}$ is sharp.
This completes the proof.
\end{proof}

\begin{remark}
(i) Note that $r^F_{m,N}$ depends only on $m$ and $N$.

(ii)
In particular, if $X=\C$,
then Theorem \ref{thm1-subord}
reduces to \cite[Theorem 5]{KKP21}.
\end{remark}

Letting $N=1$ and $m\to \infty$ in Theorem \ref{thm1-subord},
we obtain the following corollary,
which is a generalization of \cite[Theorem 1]{A10}
to the unit ball of a complex Banach space.

\begin{corollary}
\label{corollary-subord-0}
Let $\B$ be the unit ball of a complex Banach space $X$ and let $a\in X$ be given.
Let $f,g \in H(\B,\C)$ and
let $F,G\in H(\mathbb B, X)$ be given by
$F(x)=a+f(x)x$, $G(x)=a+g(x)x$, $x\in {\mathbb B}$.
Assume that $F$ is biholomorphic on $\B$ and $G\in S(F)$.
Let
\[
G(x)=a+\sum_{s=1}^{\infty}P_s(x),
\quad x\in \B,
\]
where $P_s$ is a homogeneous polynomial of degree $s$ on $X$.
Then we have
\[
\sum_{s=1}^{\infty}{\| P_s(x)\|}
\leq
{\rm dist}(F(0),\partial F(\B))
\]
for $\| x\|=r\leq 3-2\sqrt{2}$.
The number $3-2\sqrt{2}$ cannot be improved.
\end{corollary}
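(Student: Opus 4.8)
The plan is to deduce this corollary from Theorem \ref{thm1-subord} by specializing to $N=1$ and taking the limit $m\to\infty$. Fix $x\in\B$ with $\|x\|=r\leq 3-2\sqrt{2}$. For each $m\in\N$ choose a Schwarz mapping $V_m$ having $z=0$ as a zero of order $m$ (for instance $V_m(x)=l_v(x)^{m-1}x$ for a fixed $v\in\partial\B$ and $l_v\in T(v)$); by (\ref{Schwarz-k}) we have $\|V_m(x)\|\leq\|x\|^m=r^m$. With $N=1$ the bracket $N(1-r)+r$ collapses to $1$, so Theorem \ref{thm1-subord} gives
\[
\|G(V_m(x))\|+\sum_{s=1}^{\infty}\|P_s(x)\|\leq\|F(0)\|+{\rm dist}(F(0),\partial F(\B))
\]
whenever $r\leq r^F_{m,1}$, where $r^F_{m,1}$ is the unique root in $(0,1)$ of
\[
4r^m-(1-r^m)^2+4r\left(\frac{1-r^m}{1-r}\right)^2=0.
\]

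Next I would analyze the limiting behavior of this equation. Writing $\Phi_m(r)$ for its left-hand side, one checks that $\Phi_m(0^+)=-1$ and $\Phi_m(1^-)=4+4m^2>0$; since $\Phi_m$ is continuous with a unique zero, it is negative before that zero and positive after, so $\Phi_m(r)<0$ is equivalent to $r<r^F_{m,1}$. Because $r^m\to0$ for each fixed $r\in(0,1)$, we have the pointwise limit $\Phi_m(r)\to\Phi_\infty(r):=-1+\frac{4r}{(1-r)^2}$, and $\Phi_\infty(r)=0$ is equivalent to $r^2-6r+1=0$, whose root in $(0,1)$ is $3-2\sqrt{2}$. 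Consequently, for any fixed $r<3-2\sqrt{2}$ we have $\Phi_\infty(r)<0$, hence $\Phi_m(r)<0$ for all large $m$, i.e. $r<r^F_{m,1}$; thus the displayed inequality is available for all sufficiently large $m$.

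Now I would pass to the limit $m\to\infty$ in the inequality at this fixed $x$. The sum $\sum_{s=1}^{\infty}\|P_s(x)\|$ and the right-hand side do not depend on $m$, while $\|V_m(x)\|\leq r^m\to0$ and $G$ is continuous with $G(0)=F(0)$, so $\|G(V_m(x))\|\to\|F(0)\|$. Letting $m\to\infty$ and cancelling $\|F(0)\|$ yields
\[
\sum_{s=1}^{\infty}\|P_s(x)\|\leq{\rm dist}(F(0),\partial F(\B))
\]
for every $x$ with $\|x\|=r<3-2\sqrt{2}$. The boundary case $r=3-2\sqrt{2}$ follows by continuity: the estimate $\|P_s(x)\|\leq 4s\,{\rm dist}(F(0),\partial F(\B))\|x\|^s$ obtained in the proof of Theorem \ref{thm1-subord} (via Theorem \ref{thm-Bieberbach}) forces uniform convergence of the series on $r\B$, so its sum is continuous up to $\|x\|=3-2\sqrt{2}$.

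Finally, for sharpness I would exhibit the extremal mapping. Take $a=0$ and let $F$ be the Koebe-type mapping (\ref{Koebe}) with $G=F$; then $F(0)=0$, ${\rm dist}(F(0),\partial F(\B))=1/4$ by the covering theorem (Theorem \ref{covering-lemma}), and $P_s(x)=s\,l_v(x)^{s-1}x$, so at $x=rv$ one has $\sum_{s=1}^{\infty}\|P_s(rv)\|=\sum_{s=1}^{\infty}sr^s=r/(1-r)^2$. Since $r\mapsto r/(1-r)^2$ is increasing and equals $1/4$ exactly at $r=3-2\sqrt{2}$, it exceeds $1/4$ for every $r>3-2\sqrt{2}$, so the inequality fails beyond this radius and the constant cannot be improved. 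I expect the only delicate point to be the limit argument for the radius — namely verifying that the sign condition $\Phi_m(r)<0$ is genuinely equivalent to $r<r^F_{m,1}$ and that the pointwise limit of $\Phi_m$ drives the roots to $3-2\sqrt{2}$, which is what makes the inequality applicable for large $m$ at a fixed $r<3-2\sqrt{2}$.
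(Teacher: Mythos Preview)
Your proposal is correct and follows exactly the approach indicated in the paper, which simply states that the corollary is obtained by ``letting $N=1$ and $m\to\infty$ in Theorem~\ref{thm1-subord}''. You have carefully supplied the details of that limiting argument---the convergence of the roots $r^F_{m,1}$ to $3-2\sqrt{2}$, the passage to the limit in the inequality via continuity of $G$, and the sharpness via the Koebe-type extremal---none of which the paper spells out.
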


Letting $V(x)=x$ in Theorem \ref{thm1-subord},
we obtain the following corollary.

\begin{corollary}
\label{corollary-subord}
Let $\B$ be the unit ball of a complex Banach space $X$ and let $a\in X$.
Let $f,g \in H(\B,\C)$ and
let $F,G\in H(\mathbb B, X)$ be given by $F(x)=a+f(x)x, G(x)=a+g(x)x$, $x\in {\mathbb B}$.
Assume that $F$ is biholomorphic on $\B$ and $G\in S(F)$.
Let
\[
G(x)=a+\sum_{s=1}^{\infty}P_s(x),
\quad x\in\B,
\]
where $P_s$ is a homogeneous polynomial of degree $s$ on $X$.
Then for $m, N\in\mathbb{N}$, we have
\[
\| G(x)\|+\sum_{s=N}^{\infty}{\| P_s(x)\|}
\leq
\|F(0)\|+{\rm dist}(F(0),\partial F(\B))
\]
for $\| x\|=r\leq r^F_{N}$,
where $  r^F_{N}$ is the unique root in $(0,1)$ of the equation
\[
4r-(1-r)^2+4r^N[N(1-r)+r]=0.
\]
The number $ r^F_{N}$ cannot be improved.
\end{corollary}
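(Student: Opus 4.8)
The plan is to obtain this corollary as the special case $V(x)=x$ of Theorem \ref{thm1-subord}. First I would observe that the identity mapping $V(x)=x$ is a Schwarz mapping on $\B$, since $V(0)=0$ and $V$ maps $\B$ into $\B$, and that $z=0$ is a zero of order $m=1$ for $V$: indeed $V(0)=0$ while $DV(0)=I\neq 0$. Thus Theorem \ref{thm1-subord} is applicable with $m=1$ and this choice of $V$.

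With $V(x)=x$ we have $G(V(x))=G(x)$, so the left-hand side of the inequality in Theorem \ref{thm1-subord} becomes exactly
\[
\| G(x)\|+\sum_{s=N}^{\infty}\| P_s(x)\|,
\]
which is the quantity appearing in the present statement. It then remains to verify that the defining equation for $r^F_{m,N}$ specializes correctly. Setting $m=1$ in
\[
4r^m-(1-r^m)^2+4r^N[N(1-r)+r]\left(\frac{1-r^m}{1-r}\right)^2=0
\]
collapses the factor $\left(\frac{1-r^m}{1-r}\right)^2$ to $1$ and reduces the equation to
\[
4r-(1-r)^2+4r^N[N(1-r)+r]=0,
\]
which is precisely the equation defining $r^F_N$. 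Hence $r^F_N=r^F_{1,N}$, and the inequality holds for $\| x\|=r\leq r^F_N$.

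For the sharpness assertion, I would reuse the extremal configuration from the proof of Theorem \ref{thm1-subord}: for fixed $v\in\partial\B$ and $l_v\in T(v)$, take $F$ to be the normalized biholomorphic mapping given in (\ref{Koebe}), put $G=F$, and let $V(x)=x$ (the order-one case). Evaluating at $x=rv$ yields equality in the bound, so $r^F_N$ cannot be increased. Since every step is a direct specialization of an already-proved theorem, there is no substantive obstacle; the only point demanding care is the routine algebraic check that the $m=1$ instance of the defining equation coincides with the stated one.
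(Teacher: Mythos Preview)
Your proposal is correct and follows exactly the paper's approach: the corollary is introduced in the paper by the sentence ``Letting $V(x)=x$ in Theorem \ref{thm1-subord}, we obtain the following corollary,'' and you have simply supplied the routine verification that $V(x)=x$ is a Schwarz mapping with a zero of order $m=1$ and that the defining equation specializes correctly.
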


\subsection{Bloch constant}
\label{Bloch constant}

For $F\in H(\B,X)$ and $a\in \B$, a {\it schlicht ball} of $F$ centered at $F(a)$ is a ball
in $X$ with
center $F(a)$ such that $F$ maps an open subset of $\B$ containing $a$ biholomorphically
onto this ball. For a point $a\in \B$, let $r(a; F)$ be the radius of the largest schlicht
ball of $F$ centered at $F(a)$.
Let $r(F)=\sup\{ r(F,a): a\in \B\}$.

Let
\[
\| F\|_{\mathcal{B}}=\| F(0)\|+\sup_{x\in \B\setminus \{ 0\}}(1-\|x\|^2)\frac{\|DF(x)x\|}{\| x\|}.
\]
We say that $F$ is a {\it Bloch mapping} on $\B$ if $\| F\|_{\mathcal{B}}<\infty$.

Let
\[
\mathcal{B}_1(X)=\{ F: F(x)=f(x)x, f\in H(\B,\C), f(0)=1, \| F\|_{\mathcal{B}}=1 \}
\]
and
\[
\mathbf{B}^*_{X}=\inf \{ r(F): F\in  \mathcal{B}_1(X) \}.
\]
Then
\[
\mathcal{B}_1(\C)=\{ f: f\in H(\U,\C), f(0)=0, f'(0)=1, \| f\|_{\mathcal{B}}=1\},
\]
where
\[
\| f\|_{\mathcal{B}}=| f(0)|+\sup_{\zeta\in \U}(1-|\zeta|^2)|f'(\zeta)|
\]
and $\mathbf{B}^{*}_{\C}=\mathbf{B}$ is the usual Bloch constant in one complex variable.

By applying univalence criterion (Theorem \ref{thm-univalence}),
we can generalize Bonk's distortion theorem \cite{B90}
and
the lower estimate for the Bloch constant in one complex variable
\cite{B90}
to Bloch mappings of the form $F(x)=f(x)x$ with $DF(0)=I$ on the unit ball
of infinite dimensional complex Banach spaces as follows.

\begin{theorem}
\label{theorem-Bloch}
\begin{enumerate}
\item[(i)]
Suppose $F\in \mathcal{B}_1(X)$. Then
\begin{equation}
\label{eq-Bonk-distortion}
\Re l_x(DF(x)x)\geq \frac{1-\sqrt{3}\| x\|}{\left(1-\frac{1}{\sqrt{3}}\| x\|\right)^3}\| x\|,
\quad \forall \, l_x\in T(x),\ 0<\| x\|\leq \frac{1}{\sqrt{3}};
\end{equation}
\item[(ii)]
$\mathbf{B}^*_{X}\geq \frac{\sqrt{3}}{4}$.
\end{enumerate}
\end{theorem}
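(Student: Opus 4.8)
The plan is to reduce both assertions to the one-variable slices $f_u(\zeta)=\zeta f(\zeta u)$, $u\in\partial\B$, and to feed them into Bonk's one-variable distortion theorem \cite{B90} together with the univalence criterion (Theorem \ref{thm-univalence}). First I would check that for $F\in\mathcal B_1(X)$ each slice $f_u$ satisfies the hypotheses of Bonk's theorem. By definition $F(0)=0$, so $\|F(0)\|=0$; also $f_u(0)=0$ and, by (\ref{formula-4}), $f_u'(0)=f(0)=1$. For the Bloch bound, writing $x\ne0$ as $x=\|x\|u$ with $u=x/\|x\|$ and using (\ref{formula-5}) gives $(1-\|x\|^2)\|DF(x)x\|/\|x\|=(1-\|x\|^2)|f_u'(\|x\|)|$; since replacing $u$ by $e^{i\theta}u$ sends $f_u'(re^{i\theta})$ to $f_{e^{i\theta}u}'(r)$, letting $u$ run over $\partial\B$ and $r$ over $(0,1)$ recovers $(1-|\zeta|^2)|f_u'(\zeta)|$ for every $u$ and $\zeta$. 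Hence $\sup_{u\in\partial\B}\|f_u\|_{\mathcal B}=\|F\|_{\mathcal B}=1$, so $(1-|\zeta|^2)|f_u'(\zeta)|\le1$ for all $u$ and $\zeta\in\U$.

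For part (i) I would evaluate the left-hand side directly. With $x\ne0$, $u=x/\|x\|$ and $\zeta=\|x\|$ (a positive real), (\ref{formula-5}) gives $DF(x)x=\|x\|f_u'(\|x\|)u$, and since $l_x(u)=l_x(x)/\|x\|=1$ for every $l_x\in T(x)$, we get $l_x(DF(x)x)=\|x\|f_u'(\|x\|)$. As $\|x\|$ is real this yields $\Re l_x(DF(x)x)=\|x\|\,\Re f_u'(\|x\|)$, and Bonk's distortion theorem applied to $f_u$ at the real point $\|x\|\in(0,1/\sqrt3]$ gives exactly (\ref{eq-Bonk-distortion}); note the right-hand side is independent of the choice of $l_x$.

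For part (ii) it is enough to produce, for every $F\in\mathcal B_1(X)$, a schlicht ball of radius $\sqrt3/4$ centred at $F(0)=0$, so that $r(F)\ge r(0;F)\ge\sqrt3/4$ and the infimum over $\mathcal B_1(X)$ gives $\mathbf B^*_X\ge\sqrt3/4$. Running the estimate of part (i) in every direction shows $\Re f_u'(\zeta)\ge(1-\sqrt3|\zeta|)/(1-|\zeta|/\sqrt3)^3>0$ for $|\zeta|<1/\sqrt3$; since $\U_{1/\sqrt3}$ is convex, each $f_u$ is univalent there (Noshiro--Warschawski), and Theorem \ref{thm-univalence} then shows $F$ is biholomorphic on $B_{1/\sqrt3}$. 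The \emph{key structural observation} is that $F(x)=f(x)x$ is always a scalar multiple of $x$, so for $y\ne0$ any solution of $F(x)=y$ must be a multiple of $y$: with $u=y/\|y\|$ the equation $F(\zeta u)=y$ reduces to $f_u(\zeta)=\|y\|$ on $\U_{1/\sqrt3}$. A radial integration of the Bonk bound gives, for $|\zeta|=1/\sqrt3$ and $\theta=\arg\zeta$, $|f_u(\zeta)|\ge\int_0^{1/\sqrt3}\Re f_u'(te^{i\theta})\,dt\ge\int_0^{1/\sqrt3}(1-\sqrt3 t)/(1-t/\sqrt3)^3\,dt=\sqrt3/4$; by continuity of $f_u$ on $\overline{\U_{1/\sqrt3}}$ and the planar fact $\partial f_u(\U_{1/\sqrt3})\subseteq f_u(\partial\U_{1/\sqrt3})$, this yields ${\rm dist}(0,\partial f_u(\U_{1/\sqrt3}))\ge\sqrt3/4$, i.e. $f_u(\U_{1/\sqrt3})\supseteq\U_{\sqrt3/4}$. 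Hence for each $y$ with $\|y\|<\sqrt3/4$ there is a unique $\zeta\in\U_{1/\sqrt3}$ with $f_u(\zeta)=\|y\|$, and $x=\zeta u\in B_{1/\sqrt3}$ satisfies $F(x)=y$; thus $B_{\sqrt3/4}\subseteq F(B_{1/\sqrt3})$ and $B_{\sqrt3/4}$ is a schlicht ball of $F$ at the origin.

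The step I expect to be the main obstacle is precisely this covering estimate: in infinite dimensions one cannot invoke compactness of $\overline{B_{1/\sqrt3}}$ as in the finite-dimensional Bloch-constant arguments of Liu and Wang--Liu. The scalar-multiple structure of $F$ is what resolves it, since it localizes both the inversion of $F$ and the distance estimate to the planar slices $f_u$, where Bonk's theorem and elementary planar topology suffice.
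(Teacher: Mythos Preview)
Your proposal is correct and follows essentially the same route as the paper: reduce to the one-variable slices $f_u$, verify $f_u\in\mathcal B_1(\C)$ via (\ref{formula-5}), apply Bonk's distortion theorem for part (i), and combine the univalence of each $f_u$ on $\U_{1/\sqrt3}$ with Theorem \ref{thm-univalence} and the relation $F(\zeta u)=f_u(\zeta)u$ to get the schlicht ball $B_{\sqrt3/4}$ for part (ii). The only difference is that the paper simply quotes Bonk's result that $f_u$ maps $\U_{1/\sqrt3}$ conformally onto a domain containing $\U_{\sqrt3/4}$, whereas you re-derive this from the distortion estimate via Noshiro--Warschawski and a radial integration; this is more self-contained but not a different idea.
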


\begin{proof}
(i) Assume that $F(x)=f(x)x\in \mathcal{B}_1(X)$.
Let $u\in \partial \B$ be arbitrarily fixed.
Then by (\ref{formula-5}), we have
$f_u\in \mathcal{B}_1(\C)$.
By Bonk's distortion theorem \cite{B90} on $\U$,
we have
\[
\Re f_u'(\zeta)\geq \frac{1-\sqrt{3}|\zeta|}{\left(1-\frac{1}{\sqrt{3}}|\zeta|\right)^3},
\quad
|\zeta|\leq \frac{1}{\sqrt{3}}.
\]
By using (\ref{formula-5}) again, we obtain
\begin{eqnarray*}
\Re l_{\zeta u}(DF(\zeta u)\zeta u)
&=&
\Re l_{\zeta u}(f_u'(\zeta)\zeta u)
\\
&=&
\Re f_u'(\zeta)| \zeta |
\\
&\geq &
\frac{1-\sqrt{3}|\zeta|}{\left(1-\frac{1}{\sqrt{3}}|\zeta|\right)^3}| \zeta |
\end{eqnarray*}
for $0<|\zeta|\leq 1/\sqrt{3}$.
Since $u \in \partial \B$ is arbitrary, we obtain
(\ref{eq-Bonk-distortion}), as desired.

(ii)
Let $F\in \mathcal{B}_1(X)$.
Then for any $u\in \partial \B$,
$f_u\in \mathcal{B}_1(\C)$.
By the proof of the lower estimate for the Bloch constant due to Bonk \cite{B90},
$f_u$ maps the disc $\U_{1/\sqrt{3}}$ conformally onto a simply connected domain $D_u$ such that $D_u$ contains the disc $\U_{\sqrt{3}/{4}}$.
By Theorem \ref{thm-univalence},
$F$ is a biholomorphic mapping on  $B_{1/\sqrt{3}}$
and $F(B_{1/\sqrt{3}})$ contains the ball $B_{{\sqrt{3}}/{4}}$.
This completes the proof.
\end{proof}

\subsection{Alexander's type theorem}
\label{Alexander's theorem}

As a corollary of starlikeness criterion (Theorem \ref{thm-starlike}) and quasi-convexity criterion (Theorem \ref{thm-convex}), we can prove Alexander's type theorem
for mappings $F$ of the form $F(x)=f(x)x$ with $f(0)=1$ and $G(x)=DF(x)x=(f(x)+Df(x)x)x$.

\begin{theorem}
\label{Alexander}
Let $F\in H(\mathbb B,X)$ be given by $F(x)=f(x)x$, $x\in \mathbb B$, 
where $f\in H(\B,\C)$ with $f(0)=1$,
and let $G(x)=DF(x)x$.
Then $F$ is quasi-convex of type $B$ on $\B$ if and only if $G$ is starlike on $\B$.
\end{theorem}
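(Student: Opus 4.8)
The plan is to reduce the multidimensional Alexander's theorem to the classical one-variable Alexander's theorem by using the slice functions $f_u$, exactly as was done for the univalence, starlikeness, and quasi-convexity criteria earlier in the paper. The key observation is that both the hypothesis (quasi-convexity of type $B$ of $F$) and the conclusion (starlikeness of $G$) can be tested slice-by-slice along $u\in\partial\B$, and on each slice the relevant one-variable function is exactly $f_u$ or its Alexander transform.

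First I would set $g(x)=f(x)+Df(x)x$, so that $G(x)=g(x)x$ by formula (\ref{formula-1}); thus $G$ is again of the form ``scalar function times $x$,'' and the starlikeness criterion (Theorem \ref{thm-starlike}) applies to it. The slice function associated to $G$ is $g_u(\zeta)=\zeta g(\zeta u)=\zeta\bigl(f(\zeta u)+Df(\zeta u)(\zeta u)/\zeta\cdot\zeta\bigr)$; the point I would verify is that this equals the classical Alexander transform $\zeta f_u'(\zeta)$ of $f_u$. Indeed, by (\ref{formula-4}) we have $f_u'(\zeta)=f(\zeta u)+\zeta Df(\zeta u)u$, so $\zeta f_u'(\zeta)=\zeta f(\zeta u)+\zeta^2 Df(\zeta u)u=\zeta\bigl(f(\zeta u)+Df(\zeta u)(\zeta u)\bigr)=g_u(\zeta)$. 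This identification is the crux: it says the slice of $G$ is precisely the Alexander transform of the slice of $F$.

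With this in hand the proof is a short chain of equivalences. By Theorem \ref{thm-convex}, $F$ is quasi-convex of type $B$ on $\B$ if and only if $f_u$ is convex on $\U$ for every $u\in\partial\B$. By the classical Alexander theorem of \cite{Al15}, $f_u$ is convex on $\U$ if and only if its Alexander transform $\zeta f_u'(\zeta)=g_u(\zeta)$ is starlike on $\U$. Finally, by Theorem \ref{thm-starlike} applied to $G(x)=g(x)x$, the function $g_u$ is starlike on $\U$ for every $u\in\partial\B$ if and only if $G$ is starlike on $\B$. Stringing these together yields the desired equivalence. I would take $r=1$ throughout so that $\U_r=\U$ and $B_r=\B$.

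The main thing to check carefully, rather than a genuine obstacle, is the normalization bookkeeping needed to invoke the one-variable results: Theorem \ref{thm-convex} requires $f(0)=1$ (so that $f_u$ is normalized with $f_u'(0)=1$), which is given; and I must confirm that $g_u$ is then a normalized univalent function whenever $f_u$ is convex, so that both the classical Alexander theorem and Theorem \ref{thm-starlike} apply to it on the nose. Since $g_u=\zeta f_u'$ and $f_u$ is normalized, $g_u(0)=0$ and $g_u'(0)=f_u'(0)=1$, so the normalization propagates correctly and no additional hypothesis on $G$ is needed. I expect no real difficulty here, since every step is either a direct application of a criterion already established in this paper or the classical scalar Alexander theorem; the only care required is aligning the definitions of the slice functions on the two sides of the equivalence.
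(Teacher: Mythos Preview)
Your proposal is correct and follows essentially the same approach as the paper's proof: define $g(x)=f(x)+Df(x)x$ so that $G(x)=g(x)x$, use (\ref{formula-4}) to identify $g_u(\zeta)=\zeta f_u'(\zeta)$, and then chain Theorem \ref{thm-convex}, the classical Alexander theorem on $\U$, and Theorem \ref{thm-starlike}. The only cosmetic point is that Theorems \ref{thm-starlike} and \ref{thm-convex} are stated for fixed $r\in(0,1)$, so rather than ``taking $r=1$'' you should apply them for every $r\in(0,1)$ to obtain the conclusion on all of $\B$; the paper does this implicitly as well.
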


\begin{proof}
Let $g(x)=f(x)+Df(x)x$ and let
$g_u(\zeta)=\zeta g(\zeta u)$.
Then by (\ref{formula-4}), we have
\begin{eqnarray*}
g_u(\zeta)&=&\zeta g(\zeta u)
\\
&=&
\zeta(f(\zeta u)+Df(\zeta u)\zeta u)
\\
&=&
\zeta f_u'(\zeta).
\end{eqnarray*}
Therefore,
by Theorems \ref{thm-starlike}, \ref{thm-convex} and
Alexander's theorem \cite{Al15} on $\U$,
we conclude that
$F$ is quasi-convex of type $B$ on $\B$ if and only if $G$ is starlike on $\B$.
This completes the proof.
\end{proof}




\section*{Funding}
Hidetaka Hamada is partially supported by
JSPS KAKENHI Grant Number JP22K03363.

\section*{Data Availability Statement}
Data sharing not applicable to this article as no datasets were generated or analysed during the current study.

\section*{Declartion}

\section*{Conflict of interest}
The authors have no relevant financial or non-financial interests
to disclose.







\begin{thebibliography}{100}

\bibitem{A10}
Y. Abu Muhanna,
Bohr's phenomenon in subordination and bounded harmonic classes,
Complex Var. Elliptic Equ.
{55}  (2010), 1071--1078.

\bibitem{a}
L.V. Ahlfors.
An extension of Schwarz's lemma,
Trans. Amer. Math. Soc.
{43} (1938), 359--364.

\bibitem{Al15}
J. W. Alexander,
Functions which map the interior of the unit circle upon simple regions,
Ann. of Math. (2)
17 (1915), no.1, 12--22.

\bibitem{B16}
L. Bieberbach,
Uber die Koeffizienten derjenigen Potenzreihen,
welche eine schlichte Abbildung des Einheitskreises vermitteln.
Sitzungsberichte Preussische Akademie der Wissenschaften,
1916, 940--955.

\bibitem{B90}
M. Bonk,
On Bloch's constant,
Proc. Amer. Math. Soc.
{110} (1990), 889--894.


\bibitem{BMY96}
M. Bonk, D. Minda, H. Yanagihara,
Distortion theorems for locally univalent Bloch functions,
J. Anal. Math.
{69} (1996), 73--95.


\bibitem{BMY97}
M. Bonk, D. Minda, H. Yanagihara,
Distortion theorem for Bloch functions,
Pacific J. Math.
{179} (1997), 241--262.


\bibitem{dB85}
L. de Branges,
A proof of the Bieberbach conjecture,
Acta Math.
154 (1985), no.1-2, 137--152.

\bibitem{Cartan33}
H. Cartan,
Sur la possibilit\'e d\'etendre aux fonctions de plusieurs variables complexes la theorie des fonctions univalents,
Montel P. Lecons sur les Fonctions Univalents ou Mutivalents. Paris: Gauthier-Villars, 1933.


\bibitem{CG96}
H. Chen, P.M. Gauthier,
On Bloch's constant,
J. Anal. Math.
{69} (1996), 275--291.

\bibitem{CHHK16}
C.H. Chu, H. Hamada, T. Honda, G. Kohr,
Distortion of locally biholomorphic Bloch mappings on bounded symmetric domains,
J. Math. Anal. Appl.
{441} (2016), 830--843.

\bibitem{EJ22}
M. Elin, F. Jacobzon,
Note on the Fekete-Szeg\"{o} problem for spirallike mappings in Banach spaces,
Results Math.
77, no. 3, Paper No. 137, 6 pp  (2022)


\bibitem{FG94}
C.H. FitzGerald, S. Gong,
The Bloch theorem in several complex variables,
J. Geom. Anal.
{4} (1994), 35--58.

\bibitem{GK03}
I. Graham, G. Kohr,
{Geometric function theory in one and
higher dimensions},
Marcel Dekker Inc., New York,
2003.

\bibitem{Gu75}
K. R. Gurganus,
{$\Phi$-like holomorphic functions
in $\mathbf{C}^{n}$ and Banach spaces},
Trans. Amer. Math. Soc.
{205} (1975), 389--406.

\bibitem{H19JAM}%
H. Hamada,
A distortion theorem and the Bloch constant for Bloch mappings  in $\mathbb{C}^n$,
J. Anal. Math.
{137} (2019), 663--677.

\bibitem{H2023RM}
H. Hamada,
Fekete-Szeg\"{o} problems
for spirallike mappings and close-to-quasi-convex mappings on the unit ball of a complex Banach space,
Results Math.
78 (2023), no. 3, Paper No. 109, 16 pp.

\bibitem{HHK12}
H. Hamada, T. Honda, G. Kohr,
{Trace-order and a distortion theorem for linearly invariant families
on the unit ball of a finite dimensional JB$^*$-triple},
{ J. Math. Anal. Appl.}
{396} (2012), 829--843.

\bibitem{HHK-new}
H. Hamada, T. Honda, M. Kohr,
Bohr-Rogosinski radius for holomorphic mappings with values in
higher dimensional complex Banach spaces,
submitted.


\bibitem{HK02}
H. Hamada, G. Kohr,
$\Phi$-like and convex mappings in infinite dimensional spaces,
{Rev. Roumaine Math. Pures Appl.}
{47} (2002), 315--328.

\bibitem{HKK2022JMAA}%
H. Hamada, G. Kohr, M. Kohr,
Fekete-Szeg\"{o} problem
for univalent mappings in one and higher dimensions,
J. Math. Anal. Appl.
{516} (2022), 126526.

\bibitem{HKK2022Mathematics}
H. Hamada, G. Kohr, M. Kohr,
Coefficient Inequalities for Biholomorphic Mappings on the Unit Ball of a Complex Banach Space,
Mathematics 2022, 10(24), 4832;

\bibitem{H54}
L. H\"{o}rmander,
On a Theorem of Grace,
Math. Scand.
2 (1954), 55--64.

\bibitem{KKP21}
I.R. Kayumov,
D.M. Khammatova,
S. Ponnusamy,
Bohr-Rogosinski phenomenon for analytic functions and Ces\'{a}ro operators,
J. Math. Anal. Appl.
496 (2021), no. 2, 124824, 17 pp.

\bibitem{LX21}
Y. Lai, Q. Xu,
On the coefficient inequalities for a class of holomorphic mappings
associated with spirallike mappings in several complex variables,
Results Math.
76 (2021), no. 4, Paper No. 191, 18 pp.

\bibitem{L92}
X.Y. Liu,
Bloch functions of several complex variables,
Pacific J. Math.
{152} (1992), 347--363.


\bibitem{L22}
X. Liu,
Sharp distortion theorems for a class of biholomorphic mappings in several complex variables,
Acta Math. Sci. Ser. B (Engl. Ed.)
42 (2022), no.2, 454--466.


\bibitem{LL16}
X. Liu, T. Liu,
Sharp estimates of all homogeneous expansions for a subclass of quasi-convex mappings of type B and order $\alpha$ in several complex variables,
Acta Math. Sci. Ser. B (Engl. Ed.)
36 (2016), no.6, 1808--1818.

\bibitem{LL17}
X. Liu, T. Liu,
The estimates of all homogeneous expansions for a subclass of biholomorphic mappings which have parametric representation in several complex variables,
Acta Math. Sin. (Engl. Ser.)
33 (2017), no.2, 287--300.

\bibitem{LL18}
X. Liu, T. Liu,
The estimates of all homogeneous expansions for a subclass of $\varepsilon$ quasi-convex mappings in several complex variables,
Chinese Ann. Math. Ser. B
39 (2018), no.4, 621--632.

\bibitem{LL21}
X. Liu, T. Liu,
On the refined esitmates of all homogeneous expansions for a subclass of biholomorphic starlike mappings in several complex variables[Corrected title: On the refined estimates of all homogeneous expansions for a subclass of biholomorphic starlike mappings in several complex variables],
Chinese Ann. Math. Ser. B
42 (2021), no.6, 909--920.

\bibitem{LLX15}
X. Liu, T. Liu, Q. Xu,
A proof of a weak version of the Bieberbach conjecture in several complex variables,
Sci. China Math.
58 (2015), no.12, 2531--2540.

\bibitem{LM92}
X.Y. Liu, D. Minda,
Distortion theorems for Bloch functions,
Trans. Amer. Math. Soc.
{333} (1992),  325--338.

\bibitem{ReSh05}
S. Reich, D. Shoikhet,
{Nonlinear Semigroups, Fixed Points, and Geometry of Domains in
Banach Spaces}, Imperial College Press, London, 2005.

\bibitem{RS99}
K.A. Roper, T.J. Suffridge,
Convexity properties of holomorphic mappings in $\C^n$,
Trans. Amer. Math. Soc.
351 (1999), no. 5, 1803--1833.

\bibitem{Su77}
T.J. Suffridge,
Starlikeness, convexity and other geometric properties of holomorphic maps in higher dimensions.
Complex analysis (Proc. Conf., Univ. Kentucky, Lexington, Ky., 1976), pp. 146--159
Lecture Notes in Math., Vol. 599
Springer-Verlag, Berlin-New York, 1977



\bibitem{XFFL23}
Q. Xu,
W. Fang,
W. Feng,
T. Liu,
The Fekete-Szeg\"{o} inequality and successive coefficients difference for a subclass of close-to-starlike mappings in complex Banach spaces,
Acta Math. Sci. Ser. B (Engl. Ed.)
43 (2023), no.5, 2075--2088.

\bibitem{XLL18}
Q.H. Xu, T. Liu, X. Liu,
Fekete and Szeg\"{o} problem in one and higher dimensions,
{Sci. China Math.}
{61} (2018), 1775--1788.


\bibitem{WL08}
J.F. Wang, T.S. Liu,
Bloch constant of holomorphic mappings on the unit polydisk of $\mathbb{C}^n$,
Sci. China Ser. A
{51} (2008) 652--659.

\bibitem{WL09}
J.F. Wang, T.S. Liu,
Distortion theorem for Bloch mappings on the unit ball
$\mathcal{B}^n$,
Acta Math. Sin. (Engl. Ser.)
{25} (2009), 1583--1590.

\bibitem{XSO23}
L. Xiong, X. Sima, D. Ouyang,
Bounds of all terms of homogeneous expansions for a subclass of $g$-parametric biholomorphic mappings in $\C^n$,
Anal. Math. Phys.
13 (2023), no.2, Paper No. 34, 13 pp.

\bibitem{XXZ24}
L. Xiong, J. Xiong, R. Zhang,
Some results of quasi-convex mappings which have a $\Phi$-parametric representation in higher dimensions,
Anal. Math. Phys.
14 (2024), no.3, Paper No. 68.

\bibitem{GY93}
Z.M. Yan, S. Gong,
Bloch constant of holomorphic mappings on bounded symmetric domains,
Sci. China Ser. A
{36} (1993), 285--299.

\end{thebibliography}
\end{document}